\newtheorem{theorem}{Teorema}[section]
\newtheorem{proposition}[theorem]{Proposición}
\newtheorem{lemma}[theorem]{Lema}
\theoremstyle{definition}
\newtheorem{definition}[theorem]{Definición}
\theoremstyle{remark}
\newtheorem{remark}[theorem]{Obervación}
\theoremstyle{remark}
\newtheorem{example}[theorem]{Ejemplo}
\newcommand{\field}{F}
\newcommand{\dg}{\text{DG}}
\newcommand{\dga}{\text{DGA}}
\newcommand{\dgacat}{\text{DGA-Mod}}
\newcommand{\coeq}{\overset{\curvearrowleft}{\rightrightarrows}}
\begin{document}
\title{Colimites en la categoría de operads simétricos}
\author{Jesús Sánchez Guevara}
\date{%
    Noviembre 2022\\
    Escuela de Matemática\\%
    Universidad de Costa Rica, CR.\\
	\href{mailto: jesus.sanchez_g@ucr.ac.cr}{jesus.sanchez\_g@ucr.ac.cr}\\
}

\maketitle
	\begin{abstract}
	En el presente artículo se hace un estudio detallado de las construcciones
	existentes de colimites en la categoría de operads simétricos.
	Además, se presentan algunos ejemplos de operads obtenidos a partir
	de colímites de otros operads.
	\end{abstract}
\section{Introducción}

La noción de operad aparece en los años sesenta cuando James Dillon Stasheff 
usa $A_\infty$-álgebras para estudiar estructuras algebraicas que tienen propiedades asociativas no estrictas
(\cite{10.2307/1993608} y \cite{10.2307/1993609}), es decir, que se cumplen bajo equivalencias de homotopía.
Luego, con el fin de describir la estructura homotópica subyacente a los espacios de lazos,
Peter May introduce en los inicios de los años setenta el concepto de operad (\cite{may2007}),
el cual, entre otras cosas, engloba las álgebras estudiadas por Stasheff.
En su trabajo, May demuestra que, si un espacio tiene una estructura de álgebra sobre ciertos tipos de operads,
entonces este espacio es homotópicamente equivalente a un espacio de lazos, iterado finita o infinitamente.

Intuitivamente, el uso de un operad puede verse como una generalización de la relación entre un grupo y sus
representaciones, pero con la diferencia de que la estructura algebraica a describir puede que tenga muchas
operaciones de diferentes aridades y que estas operaciones no necesariamente cumplan con condiciones de
conmutatividad o asociatividad. Así, un operad $\mathcal{P}$ está conformado por diferentes tipos de objetos,
cuya naturaleza depende de la categoría en la que se esté trabajando. Estos objetos son los que organizan
las operaciones abstractas que luego serán realizadas como las operaciones que queremos describir.
Por ejemplo, $P(m)$ representaría las operaciones de nuestra entidad que tienen $m$ entradas y una salida.
Las condiciones que se le piden a estas familias de operaciones es que cuenten con una
composición asociativa y con un elemento unidad con respecto a esta operación. En el caso de los operads simétricos, 
también se necesitan condiciones de equivarianza con respecto a las acciones de los grupos simétricos.

Para efectos de este artículo, la teoría de operads será estudiada en el marco de las categorías de
módulos diferenciales graduados y se utilizará la definición clásica de operads formulada por May.
Luego, en la sección 4 se hará una descripción detallada de la construcción de operads libres simétricos y no simétricos,
la cual se puede interpretar como una generalización de las estructuras libres usuales asociadas a grupos, anillos,
espacios vectoriales, etc. 

Para realizar la construcción de operads libres se recurre a un punto de vista categórico, donde los operads
son descritos como cierto tipo de monoides. Esto nos lleva a tener una construcción libre más cercana a la
intuición, pues con los productos tensoriales asociados, las interpretaciones de los objetos como operaciones 
de varias entradas son más claras.
En la última parte se estudian las principales adjunciones entre los funtores libres de operads libres simétricos
y no simétricos, con la idea de describir el caso simétrico a través del caso no simétrico. 

El presente trabajo está basado en el segundo capítulo de mi tesis doctoral (\cite{sanchez-jesus-thesis-2016}), 
donde esta manera de abordar la construcción de operads libres lleva al diseño de operads del tipo
$E_\infty$, los cuales son usados para estudiar las propiedades homotópicas de estructuras asociadas
a complejos de cadenas, como las descritas por Alain Prouté en \cite{alain-transf-EM-1983} y \cite{alain-transf-AW-1984}.

\section{Preliminares}
		
	Se utilizarán libremente el lenguaje y las propiedades de los módulos diferenciales graduados
	que aparecen en $\cite{loday2012algebraic}$ y $\cite{sanchez-jesus-thesis-2016}$.
	Con respecto a la teoría de categorías, las terminologías y propiedades son tomadas 
	de $\cite{MacLane-1998}$ y $\cite{alain-logique-cat}$.	

	Usaremos $F$ para denotar un cuerpo, el cual puede ser 
	$\mathbb{Z}/p\mathbb{Z}$, donde $p$ es un número primo, o incluso $\mathbb{Q}$.
	Los módulos sobre $F$ serán llamados simplemente módulos.
	Se denota $[n]$ al conjunto $\{1,\ldots,n\}$, donde $n$ es un entero positivo.
	El grupo simétrico formado por las permutaciones de $[n]$ se escribe $\Sigma_n$.

	Un módulo $M$ se dirá módulo graduado si existe una familia $\{M_i\}_{i\in \mathbb{Z}}$
	de submódulos de $M$, tales que $M=\bigoplus_{i\in \mathbb{Z}}M_i$.	
	Un módulo diferencial graduado o $\dg$-módulo 
	es un módulo graduado $M$ junto con un morfismo $\partial:M\to M$
	de grado $-1$, tal que $\partial \circ \partial=0$.	
	Una aumentación de un $\dg$-módulo es un morfismo de $\dg$-módulos 
	$\epsilon:M\to \field$ de grado $0$. Similarmente, una coaumentación de $M$
	es un morfismo de $\dg$-módulos $\eta:\field\to M$ de grado $0$.
	Si un $\dg$-módulo $M$ tiene una aumentación $\epsilon$ y una coaumentación $\eta$,
	tal que $\epsilon \circ \eta=1_\field$, entonces $M$ se dice módulo diferencial
	graduado con aumentación o $\dga$-módulo. Se denota la categoría de $\dga$-módulos 
	como $\dgacat$. 

\section{Operads simétricos}

	Los operads simétricos u operads, son colecciones de objetos que tienen asociados a sus componentes
	acciones de los grupos simétricos $\Sigma_n$.
	Debido a que se trabajará en un contexto graduado, 
	los signos de los diagramas incluidos están determinados por la convención de Koszul y
	son omitidos para facilitar la lectura. Recuerde que la convención de Koszul dice que, 
	si la posición de dos símbolos en una expresión, 
	de grados $p$ y $q$, es permutada,
	la expresión resultante será multiplicada por $(-1)^{pq}$.

\begin{definition}
\label{df-operad}
\index{operad@Operad}
	Un operad $\mathcal{P}$ es una colección de $\dga$-módulos $\{\mathcal{P}(n)\}_{n\geq 0}$
	junto con:
	\begin{enumerate}
	\item Un morfismo $\eta:\field\to \mathcal{P}(1)$, llamado la unidad de $\mathcal{P}$. 
	\item Para cada $n$, una acción a la derecha por el grupo simétrico $\Sigma_n$ sobre $\mathcal{P}(n)$,
	es decir, un morfismo de $\dga$-módulos que hace de $P(n)$ un DGA-$\field[\Sigma_n]$-módulo derecho.
		\begin{equation}
			\begin{gathered}
			P(n)\otimes \field[\Sigma_n] \longrightarrow P(n)
			\end{gathered}
		\end{equation}
	\item Por cada tupla $(h,i_1,\ldots,i_h)$, un morfismo de $\dga$-módulos,
		\begin{equation}
			\begin{gathered}
			\gamma_{(h,i_1,\ldots,i_h)}: \mathcal{P}(h)\otimes \mathcal{P}(i_1)\otimes 
			\cdots \otimes \mathcal{P}(i_h)\to \mathcal{P}(n)
			\end{gathered}
		\end{equation}
	donde $n=i_1+\cdots +i_h$ y $n,h,i_j\geq 0$. Todo morfismo de este tipo se escribirá $\gamma$.
	\end{enumerate}
	Estos morfismos deben de cumplir las siguientes condiciones.
	\begin{enumerate}
	\item Los morfismos $\gamma$ son asociativos, en el sentido del siguiente diagrama conmutativo.
		\begin{equation}
		\label{dg-associ-operad}
		\begin{gathered}
		\xymatrix{
		P(h)\otimes \left[ \bigotimes_{p=1}^{h}P(i_p) \right]\otimes \left[\bigotimes_{p=1}^{h}\bigotimes_{q=1}^{i_p}P(r_{p,q})
		\ar[r]^-{\gamma \otimes 1}\right] \ar[dd]_-{\text{intercambio}} 
		& P(n)\otimes \left[ \bigotimes_{p=1}^{h}\bigotimes_{q=1}^{i_s}P(r_{p,q})\right]
		\ar[d]^-{\gamma}\\
		& P(r)\\
		P(h)\otimes \bigotimes_{p=1}^{h} \left[P(i_p)\otimes \bigotimes_{q=1}^{i_p}P(r_{p,q})\right]
		\ar[r]_-{1\otimes \gamma^{\otimes h}} 
		& P(h)\otimes \bigotimes_{p=1}^{h}P(r_p)  \ar[u]_-{\gamma}
		}
		\end{gathered}
		\end{equation}
	Donde $n=\sum_{p=1}^{h}i_p$, $r=\sum_{p=1}^{h}\sum_{q=1}^{i_p}r_{p,q}=\sum_{p=1}^{h}r_p$
	y la flecha vertical izquierda es solamente un intercambio de factores.
	\item La unidad $\eta:\field  \to P(1)$ hace conmutativos los siguientes diagramas.
		\begin{equation}
		\label{dg-unit-operad}
		\begin{gathered}
		\xymatrix@R=25pt{
		\mathcal{P}(n)\otimes \field^{\otimes n} \ar[r]^-{\cong} \ar[d]_-{1\otimes \eta^{\otimes n}}& \mathcal{P}(n)\\
		\mathcal{P}(n)\otimes \mathcal{P}(1)^{\otimes n} \ar[ru]_-{\gamma} &
		}
		\xymatrix{
		\field\otimes \mathcal{P}(n) \ar[r]^-{\cong} \ar[d]_-{\eta \otimes 1}& \mathcal{P}(n)\\
		\mathcal{P}(1)\otimes \mathcal{P}(n)\ar[ru]_-{\gamma} &
		}
		\end{gathered}
		\end{equation}
	\item La acción de los grupos simétricos deben satisfacer condiciones de equivarianza,
	expresadas por los siguientes diagramas conmutativos.
		\begin{equation}
		\label{dg-equiv-1-op}
		\begin{gathered}
		\xymatrix@C=5pc{
		\mathcal{P}(h)\otimes\mathcal{P}(i_1)\otimes \cdots \otimes\mathcal{P}(i_h)
		\ar[d]_-{\sigma \otimes \sigma^{-1}} \ar[r]^-{\gamma}
		& P(n)\ar[d]^-{\sigma(i_{1},\ldots,i_{n})}\\
		\mathcal{P}(h)\otimes\mathcal{P}(i_{\sigma(1)})\otimes \cdots \otimes \mathcal{P}(i_{\sigma(h)})
		\ar[r]^-{\gamma} & P(n)\\
		}
		\end{gathered}
		\end{equation}
	Donde $n=i_1+\cdots +i_h$ y la flecha $\sigma\otimes \sigma^{-1}$ consiste en la acción derecha de
	$\sigma$ sobre $P(h)$ y la acción izquierda de $\sigma^{-1}$ sobre el producto tensorial
	$P(i_1)\otimes \cdots \otimes P(i_h)$.
		\begin{equation}\label{dg-equiv-2-op}
		\begin{gathered}
		\xymatrix@C=5pc{
		\mathcal{P}(h)\otimes \mathcal{P}(i_1)\otimes \cdots \otimes\mathcal{P}(i_h)
		\ar[r]^-{1\otimes \tau_1\otimes \cdots \otimes \tau_h} \ar[d]_-{\gamma} & 
		\mathcal{P}(h)\otimes\mathcal{P}(i_1)\otimes \cdots \otimes\mathcal{P}(i_h)
		\ar[d]^-{\gamma}\\
		\mathcal{P}(n)\ar[r]^-{\tau_1\oplus \cdots \oplus \tau_n} & \mathcal{P}(n)
		}
		\end{gathered}
		\end{equation}
	Donde $n=i_1+\cdots +i_h$ y la acción $1\otimes \tau_1\otimes \cdots \otimes \tau_h$ 
	es la identidad de $P(h)$ sobre el primer factor y la acción derecha de $\tau_j$ en el factor $P(i_j)$.
	\end{enumerate}
\end{definition}

	Un operad fundamental que se puede ver como el paradigma para diseñar el concepto de
	operad, es el operad de endomorfismos.
\begin{definition}
\label{df-endo-operad}
\index{endomorphismoperad@Endomorphism Operad}
	Para cada $M\in \dgacat$, el operad $End(M)$ de endomorfismos de $M$ se define como:
	\begin{enumerate}
	\item Para todo $n\geq 0$, $End(M)(n)=Hom(M^{\otimes n},M)$, es decir el $\dga$-módulo de
	aplicaciones homogéneas de  $M^{\otimes n}$ a $M$.
	\item La unidad $\eta:\field \to End(M)(1)$ es la identidad de $M$. 
	\item La acción derecha de $\Sigma_n$ sobre $End(M)$ es inducida por la acción izquierda de $\Sigma_n$ 
	sobre $M^{\otimes n}$, es decir,
	$f\sigma(x_1\otimes \cdots \otimes x_n)=
	\|\sigma\|f(x_{\sigma^{-1}(1)}\otimes \cdots \otimes x_{\sigma^{-1}(n)})$. Donde $\|\sigma\|$
	es el signo de la permutación $\sigma$.
	\item La composición de aplicaciones,
		\begin{equation}
		\begin{gathered}
		\gamma:End(M)(h)\otimes End(M)(i_1)\otimes \cdots \otimes End(M)(i_h)\to End(M)(n)
		\end{gathered}
		\end{equation}
		donde $n=i_1+\cdots +i_h$, está dada por,
		\begin{equation}
		\begin{gathered}
		\gamma (f_h\otimes f_{i_1}\otimes \cdots \otimes f_{i_h})=f_h \circ(f_{i_1}\otimes \cdots \otimes f_{i_h})
		\end{gathered}
		\end{equation}
	\end{enumerate}
\end{definition}

El lector puede verificar que $End(M)$ satisface
las condiciones de la definición \ref{df-operad}.
\begin{definition}
\label{df-operads-morphism}
\index{operadmorphism@Operad morphism}
\label{df-category-operads}
\index{categoryoperads@Category of operads}
	Sean $P$ y $Q$ dos operads. 
	Un morfismo $f$ de $P$ a $Q$, es una colección de morfismos de $\dga$-módulos,
		\begin{equation}
		\begin{gathered}
		f_n:P(n)\to Q(n)
		\end{gathered}
		\end{equation}
	que satisfacen las siguientes condiciones.
	\begin{enumerate}
	\item El morfismo $f_1:P(1)\to Q(1)$ preserva la unidad de los operads, es decir, $f_1\eta=\eta$.
		\begin{equation}
		\begin{gathered}
		\xymatrix{
		P(1)\ar[rr]^-{f_1} && Q(1)\\
		&\field \ar[ul]^-{\eta} \ar[ur]_-{\eta}& 
		}
		\end{gathered}
		\end{equation}
	\item Los morfismos $f_n:P(n)\to Q(n)$ son $\Sigma_n$-equivariantes, es decir, el siguiente
	diagrama es conmutativo para cada $\sigma \in \Sigma_n$.
		\begin{equation}
		\begin{gathered}
		\xymatrix{
		P(n) \ar[r]^-{f_n} \ar[d]_-{\sigma}& Q(n)\ar[d]^-{\sigma}\\
		P(n) \ar[r]^-{f_n} & Q(n)\\
		}
		\end{gathered}
		\end{equation}
	\item $f$ preserva las operaciones de composición de los operads, 
	esto es, que el siguiente diagrama es conmutativo.
		\begin{equation}
		\begin{gathered}
		\xymatrix@C=3cm@R=1cm{
		P(h)\otimes P(i_1)\otimes \cdots \otimes P(i_h) \ar[r]^-{\gamma_P} 
		\ar[d]_-{f_h\otimes f_{i_1}\otimes \cdots \otimes f_{i_h}}
		& P(n) \ar[d]^-{f_n}\\
		Q(h)\otimes Q(i_1)\otimes \cdots \otimes Q(i_h) \ar[r]^-{\gamma_Q} 
		& Q(n) \\
		}
		\end{gathered}
		\end{equation}
	\end{enumerate}
La categoría de operads sobre $\dgacat$ se denota $\mathcal{OP}$.
\end{definition}


\begin{example}
\label{N-operad}
\index{operadn@Operad $\mathcal{N}$}
	El operad $\mathcal{N}$ está dado por $\mathcal{N}(n)=\field$ para cada $n$ no negativo,
	donde $\field$ es visto como un $\dga$-módulo concentrado en grado cero.  
	La unidad $\eta$ es la identidad de $\field$, $\Sigma_n$ actúa de manera trivial en
	cada componente y las composiciones $\gamma$, si denotamos $a_i$ al generador de grado cero de $N(i)$,
	están dadas por la regla,
		\begin{equation}
		\begin{gathered}
		\gamma:a_h\otimes a_{i_1}\otimes \cdots \otimes a_{i_h}\to a_{n}
		\end{gathered}
		\end{equation}
		donde $n=i_1+\cdots +i_h$. 
\end{example}

\begin{example}
\label{M-operad}
\index{operadm@Operad $\mathcal{M}$}
	Al hacer libre la acción de los grupos simétricos en el ejemplo anterior, se produce un operad 
	que denotamos $\mathcal{M}$. Las componentes de $\mathcal{M}$ son los
	módulos concentrados en grado cero $M(n)=\field[\Sigma_n]$,
	para cada $n$ no negativo. 
	Las composiciones $\gamma$ son determinadas, al igual que antes,
	por los generadores de grado cero, pero respetando las acciones de los grupos simétricos:
		\begin{equation}
		\begin{gathered}
		\gamma(a_h\otimes a_{i_1}\sigma_{i_1} \otimes \cdots \otimes  a_{i_h}\sigma_{i_1})= 
		a_{n}(\sigma_{i_1}\oplus \cdots \oplus \sigma_{i_h}) 
		\end{gathered}
		\end{equation}
	y
		\begin{equation}
		\begin{gathered}
		\gamma(a_h \sigma \otimes a_{i_{\sigma^{-1}(1)}}\otimes \cdots \otimes a_{i_{\sigma^{-1}(h)}})= 
		a_{n} \sigma(i_1,\ldots, i_h)
		\end{gathered}
		\end{equation}
	donde $n=i_1+\cdots +i_h$. 
\end{example}

\section{Adjunciones}\label{sec-adjunctions}

\begin{definition}\label{df-adjoint-functors}
\index{adjointfunctors@Adjoint functors $F\vdash G$}
\index{bijection-adjoint@Bijection associated to $F\vdash G$}
Sean $F:\mathcal{C}\to\mathcal{D}$ y $G:\mathcal{D}\to \mathcal{C}$ dos funtores.
$F$ se dice adjunto a la izquierda de $G$, se denota por $F\vdash G$, 
si existe una biyección $\theta$,
	\begin{equation}\label{form-bij-adj-def}
		\begin{gathered}
		\xymatrix{
		\mathcal{D}(F(X),Y) \ar[r]^-{\theta} & 	\mathcal{C}(X,G(Y))
		}
		\end{gathered}
	\end{equation}
natural en $X$ y $Y$.
\end{definition}

\begin{definition}\label{df-unit-counit}
\index{unitadjunction@Unit of an adjunction}
\index{counitadjunction@Counit of an adjunction}
Sean $F\vdash G:\mathcal{C}\to \mathcal{D}$ un par de funtores adjuntos.
	\begin{enumerate}
	\item La transformación natural $\eta:1_\mathcal{C}\to GF$ dada por
	$\eta_X=\theta(1_{F(X)})$, es llamada unidad de la adjunción.
	\item La transformación natural $\epsilon:FG\to 1_{\mathcal{D}}$
	dada por $\epsilon_Y=\theta^{-1}(1_{G(Y)})$, es llamada la counidad de la adjunción.
	\end{enumerate}
\end{definition}

\begin{proposition}\label{prop-triangular-eq-unit-counit}
\index{triangularequations@Triangular equations}
Sean $F\vdash G:\mathcal{C}\to \mathcal{D}$ un par de funtores adjuntos,
con unidad $\eta$ y counidad $\epsilon$.
Entonces los siguientes diagramas son conmutativos.
	\begin{equation}\label{diagram-triang-equ-unit-counit}
	\begin{gathered}
		\xymatrix@R=3pc@C=3pc{
			F \ar[rd]_-{1_F} \ar[r]^-{F\eta} & FGF \ar[d]^-{\epsilon_F}\\
			& F
			}
			\,\,\,\,\,\,\,\,\,\,\,\,\,\,\,\,\,\,\,\,\,
			\xymatrix@R=3pc@C=3pc{
			GFG \ar[d]_-{G\epsilon}& G \ar[l]_-{\eta_G} \ar[dl]^-{1_G}\\
			G
		}
	\end{gathered}
	\end{equation}
Las ecuaciones que se desprenden de la conmutatividad de estos diagramas,
	\begin{align}\label{fm-triangular-equ}
		&\epsilon_F \cdot F\eta=1_F\\
		&G\epsilon \cdot \eta_G=1_G
	\end{align}
son llamadas ecuaciones triangulares.
\end{proposition}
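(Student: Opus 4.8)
El plan es reducir cada ecuación triangular a la biyección que define la adjunción. La idea central es que ambas identidades se obtienen aplicando $\theta$ (o su inversa $\theta^{-1}$) al lado izquierdo de la igualdad, usando la naturalidad de $\theta$ para desplazar hacia afuera el morfismo estructural, reconociendo así una composición del tipo $\theta\circ\theta^{-1}$ que colapsa a una identidad, y concluyendo por inyectividad de $\theta$. Escribiré $\theta_{X,Y}\colon \mathcal{D}(F(X),Y)\to\mathcal{C}(X,G(Y))$ para la biyección, cuya naturalidad en $X$ afirma que $\theta_{X',Y}(g\circ F(f))=\theta_{X,Y}(g)\circ f$ para $f\colon X'\to X$, y cuya naturalidad en $Y$ afirma que $\theta_{X,Y'}(h\circ g)=G(h)\circ\theta_{X,Y}(g)$ para $h\colon Y\to Y'$.

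Primero probaría la identidad $\epsilon_F\cdot F\eta=1_F$ componente a componente. Fijado $X$ en $\mathcal{C}$, el objetivo es $\epsilon_{F(X)}\circ F(\eta_X)=1_{F(X)}$, y basta verificarlo tras aplicar la biyección $\theta_{X,F(X)}$, que es inyectiva. Aplicando la naturalidad en $X$ a $f=\eta_X\colon X\to GF(X)$ y a $g=\epsilon_{F(X)}\colon FGF(X)\to F(X)$ obtengo $\theta_{X,F(X)}(\epsilon_{F(X)}\circ F(\eta_X))=\theta_{GF(X),F(X)}(\epsilon_{F(X)})\circ\eta_X$. Ahora, por la definición de counidad, $\epsilon_{F(X)}=\theta^{-1}_{GF(X),F(X)}(1_{GF(X)})$, de modo que $\theta_{GF(X),F(X)}(\epsilon_{F(X)})=1_{GF(X)}$ y la expresión se reduce a $\eta_X$. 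Como $\theta_{X,F(X)}(1_{F(X)})=\eta_X$ por la definición de unidad, la inyectividad de $\theta_{X,F(X)}$ da la igualdad buscada.

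La segunda identidad $G\epsilon\cdot\eta_G=1_G$ la trataría de forma dual, aplicando $\theta^{-1}$ en lugar de $\theta$. Fijado $Y$ en $\mathcal{D}$, quiero $G(\epsilon_Y)\circ\eta_{G(Y)}=1_{G(Y)}$; aplico $\theta^{-1}_{G(Y),Y}$ y uso la naturalidad en $Y$ (traspuesta a $\theta^{-1}$) con $h=\epsilon_Y\colon FG(Y)\to Y$, obteniendo $\theta^{-1}_{G(Y),Y}(G(\epsilon_Y)\circ\eta_{G(Y)})=\epsilon_Y\circ\theta^{-1}_{G(Y),FG(Y)}(\eta_{G(Y)})$. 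Como $\eta_{G(Y)}=\theta_{G(Y),FG(Y)}(1_{FG(Y)})$, el segundo factor es $1_{FG(Y)}$ y la expresión queda $\epsilon_Y=\theta^{-1}_{G(Y),Y}(1_{G(Y)})$; de nuevo la inyectividad de $\theta^{-1}$ cierra el argumento. El principal obstáculo no es conceptual sino de contabilidad: hay que elegir en cada caso la naturalidad correcta (en $X$ para la primera identidad, en $Y$ para la segunda) y llevar con cuidado los subíndices de $\theta$, pues los objetos que intervienen son $GF(X)$, $FGF(X)$, $G(Y)$ y $FG(Y)$, y es fácil confundir qué componente de $\eta$ o de $\epsilon$ aparece en cada paso.
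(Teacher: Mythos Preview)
Your proof is correct and follows essentially the same route as the paper: apply the adjunction bijection $\theta$ (resp.\ $\theta^{-1}$) to the composite, use naturality in the appropriate variable to peel off $F(\eta_X)$ (resp.\ $G(\epsilon_Y)$), recognise $\theta(\epsilon_{F(X)})=1_{GF(X)}$ (resp.\ $\theta^{-1}(\eta_{G(Y)})=1_{FG(Y)}$), and conclude by injectivity. Your write-up is in fact tidier than the paper's, which leaves the second triangle as ``similar'' and whose first displayed line $\theta(\epsilon_{F(X)}F(\eta_X))=\theta(\epsilon_{F(X)})\theta(F(\eta_X))$ is notationally careless compared to your explicit use of naturality in $X$.
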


\begin{proof}
Sea $X$ un objeto de $\mathcal{C}$. Para la conmutatividad del primer diagrama
tenemos que mostrar que $1_{F(X)}=\epsilon_{F(X)}F(\eta_X)$.
Por definición $\eta_X=\theta(1_{F(X)})$, entonces al usar el hecho que $\theta$ es una biyección,
solo tenemos que chequear que $\theta(\epsilon_{F(X)}F(\eta_X))=\eta_X$.
	\begin{align*}
		\theta(\epsilon_{F(X)}F(\eta_X)) &=
		\theta(\epsilon_{F(X)})\theta(F(\eta_X)) \\
		&= \theta(\epsilon_{F(X)})\eta_X 
		&& \text{(by naturality of $\theta$)} \\
		&= \theta(\theta^{-1}(1_{GF(X)}))\eta_X \\
		&= \eta_X
	\end{align*}
La conmutatividad del otro diagrama es similar.
\end{proof}

\begin{definition}\label{df-universal-arrow}
\index{universalarrow@Universal arrow}
Sea $G:\mathcal{D}\to \mathcal{C}$ un funtor covariante y  $X$ un objeto de $\mathcal{C}$. 
Una flecha universal desde $X$ a $G$ es un morfismo de la forma $\psi:X\to GF(X)$ tal que,
para cada morfismo $f:X \to G(Y)$ hay un único morfismo de  $\mathcal{D}$, denotado $\theta^{-1}(f)$, 
desde $F(X)$ a $Y$, que satisface $G(\theta^{-1}(f))\eta=f$. 
	\begin{equation}\label{diagram-univ-arrow}
	\begin{gathered}
		\xymatrix{
		X \ar[dr]_-{f}\ar[r]^-{\eta} & GF(X)\ar[d]^-{G(\theta^{-1}(f))}\\
		& G(Y)
		}
	\end{gathered}
	\end{equation}
\end{definition}

\begin{theorem}\label{th-adjunction-exists-by-univ-arrow}
Sea $G:\mathcal{D}\to \mathcal{C}$ un functor tal que para cada objeto $X\in \mathcal{C}$ existe una flecha universal
$\eta_X:X\to G(F(X))$. 
Entonces la aplicación $F$ entre los objetos de $\mathcal{C}$ 
y $\mathcal{D}$, se extiende se manera única en un funtor $F:\mathcal{C}\to \mathcal{D}$ tal que $F\vdash G$.
\end{theorem}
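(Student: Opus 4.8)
El objetivo es construir el funtor $F$ a partir de la familia de flechas universales $\{\eta_X\}_{X\in\mathcal{C}}$ y probar que $F\vdash G$. La idea central es usar la propiedad universal de cada $\eta_X$ para definir $F$ sobre morfismos, y luego verificar la functorialidad y la adjunción. Primero definiría la acción de $F$ sobre objetos como la ya dada por hipótesis, $X\mapsto F(X)$. Para un morfismo $g:X\to X'$ en $\mathcal{C}$, consideraría la composición $\eta_{X'}\circ g:X\to G(F(X'))$, que es un morfismo de $\mathcal{C}$ con codominio de la forma $G(Y)$ con $Y=F(X')$. Aplicando la propiedad universal de la flecha $\eta_X$ (Definición \ref{df-universal-arrow}), obtengo un único morfismo de $\mathcal{D}$ que denotaría $F(g):=\theta^{-1}(\eta_{X'}\circ g):F(X)\to F(X')$, caracterizado por la ecuación $G(F(g))\circ\eta_X=\eta_{X'}\circ g$. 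Esta es precisamente la naturalidad de $\eta$, que quedará codificada en la definición misma.

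A continuación probaría que $F$ así definido es un funtor. Para la identidad, observaría que $G(1_{F(X)})\circ\eta_X=\eta_X=\eta_X\circ 1_X$, de modo que $1_{F(X)}$ satisface la ecuación que caracteriza a $F(1_X)$; por la unicidad en la propiedad universal concluyo $F(1_X)=1_{F(X)}$. Para la composición, dados $g:X\to X'$ y $g':X'\to X''$, calcularía
\begin{equation*}
G(F(g')\circ F(g))\circ\eta_X=G(F(g'))\circ G(F(g))\circ\eta_X=G(F(g'))\circ\eta_{X'}\circ g=\eta_{X''}\circ g'\circ g,
\end{equation*}
usando la functorialidad de $G$ y dos veces la ecuación característica; por unicidad esto fuerza $F(g'\circ g)=F(g')\circ F(g)$.

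Finalmente, establecería la adjunción exhibiendo la biyección natural $\theta:\mathcal{D}(F(X),Y)\to\mathcal{C}(X,G(Y))$ requerida por la Definición \ref{df-adjoint-functors}. Definiría $\theta(u)=G(u)\circ\eta_X$ para $u:F(X)\to Y$. La propiedad universal dice exactamente que para cada $f:X\to G(Y)$ existe un único $u$ con $G(u)\circ\eta_X=f$, es decir, $\theta$ es biyectiva con inversa $f\mapsto\theta^{-1}(f)$. La naturalidad en $X$ se sigue de la ecuación característica de $F$ sobre morfismos, y la naturalidad en $Y$ de la functorialidad de $G$; ambas son diagramas rutinarios.

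El paso que espero sea el principal obstáculo técnico es la verificación cuidadosa de la \emph{unicidad} del funtor: no basta definir $F$ sobre morfismos, sino argumentar que cualquier otra extensión funtorial compatible con $G$ debe coincidir con la construida. Aquí la clave es que si $F'$ es otro funtor con $F'\vdash G$ y misma unidad $\eta$, entonces para todo morfismo $g$ el morfismo $F'(g)$ también satisface $G(F'(g))\circ\eta_X=\eta_{X'}\circ g$ (por naturalidad de $\eta$), y la unicidad en la propiedad universal de $\eta_X$ obliga $F'(g)=F(g)$. La dificultad real es más conceptual que calculatoria: consiste en aislar que toda la información del funtor y de la adjunción está forzada por las flechas universales, de modo que la unicidad es consecuencia directa de la unicidad local en cada propiedad universal.
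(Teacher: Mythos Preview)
Your proposal is correct and follows exactly the approach the paper adopts: the paper does not give a self-contained proof but refers to Mac Lane (\cite{MacLane-1998}, \S4, Theorem~2) and, in the remark following the statement, sketches precisely your definition of $F$ on morphisms via the universal property of $\eta_X$ (diagram~\ref{diagram-univ-arrow-nat}). Your fuller elaboration of functoriality, the bijection $\theta(u)=G(u)\circ\eta_X$, and the uniqueness argument are the standard details that the paper leaves to Mac Lane.
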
\qed

\begin{remark}
Para una prueba del teorema \ref{th-adjunction-exists-by-univ-arrow} ver \cite{MacLane-1998}, $\S 4$ teorema 2. 
Sin embargo, la extensión de $F$ para las flechas, se hace de la siguiente manera:
sea $f:X\to Y$ un morfismo en $\mathcal{C}$ y considere el siguiente diagrama.

	\begin{equation}\label{diagram-univ-arrow-nat}
	\begin{gathered}
		\xymatrix{
		X \ar[d]_-f \ar[r]^-{\eta_X} & G(F(X)) \ar@{-->}[d]^-{G(F(f))}\\
		Y  \ar[r]^-{\eta_Y} & G(F(Y))
		}
	\end{gathered}
	\end{equation}

La existencia y unicidad de $F(f)$, el cual hace el diagrama conmutativo,
es garantizada por la propiedad universal de $\eta_X$. 
\end{remark}

\begin{theorem}\label{th-pres-colim-lim-adj}
Todo funtor adjunto a la izquierda preserva colímites y todo funtor adjunto a la derecha preserva límites.
and every right adjoint preserves limits.
\end{theorem}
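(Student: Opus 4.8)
El plan es demostrar primero que todo adjunto a la izquierda preserva colímites; la afirmación dual sobre adjuntos a la derecha y límites se obtiene aplicando este resultado en las categorías opuestas, ya que un adjunto a la derecha $G:\mathcal{D}\to\mathcal{C}$ da lugar a un adjunto a la izquierda $G^{op}:\mathcal{D}^{op}\to\mathcal{C}^{op}$ (en efecto, de la biyección $\mathcal{D}(F(X),Y)\cong\mathcal{C}(X,G(Y))$ se deduce $\mathcal{C}^{op}(G^{op}(Y),X)\cong\mathcal{D}^{op}(Y,F^{op}(X))$), y los colímites en una categoría opuesta son precisamente los límites en la categoría original. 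Así, basta concentrar el trabajo en el caso de colímites.

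Supongamos entonces $F\vdash G$ con biyección natural $\theta:\mathcal{D}(F(X),Y)\to\mathcal{C}(X,G(Y))$, y sea $D:\mathcal{J}\to\mathcal{C}$ un diagrama que posee un colímite $(L,(\iota_j)_{j\in\mathcal{J}})$ en $\mathcal{C}$. Afirmo que $(F(L),(F(\iota_j))_{j\in\mathcal{J}})$ es un colímite de $F\circ D$ en $\mathcal{D}$. Para verificarlo, construiría una cadena de biyecciones naturales en $Y$:
\[
\mathcal{D}(F(L),Y)\xrightarrow{\ \theta\ }\mathcal{C}(L,G(Y))\cong\mathrm{Cocono}(D,G(Y))\cong\mathrm{Cocono}(F\circ D,Y),
\]
donde $\mathrm{Cocono}(-,Z)$ denota el conjunto de coconos sobre el diagrama indicado con vértice $Z$. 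La primera biyección es la adjunción; la segunda es la propiedad universal del colímite $L$, que identifica los morfismos $L\to G(Y)$ con los coconos de $D$ de vértice $G(Y)$; la tercera consiste en aplicar $\theta$ factor por factor, enviando una familia $(g_j:D(j)\to G(Y))_j$ a la familia $(\theta^{-1}(g_j):F(D(j))\to Y)_j$.

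El punto central —y la parte que requiere más cuidado— es comprobar que esta última correspondencia transporta correctamente las condiciones de compatibilidad de los coconos. Dado un morfismo $\alpha:j\to j'$ en $\mathcal{J}$, la naturalidad de $\theta$ en la primera variable garantiza la identidad $\theta(f\circ F(D(\alpha)))=\theta(f)\circ D(\alpha)$, de modo que la igualdad $g_{j'}\circ D(\alpha)=g_j$ en $\mathcal{C}$ equivale a $\theta^{-1}(g_{j'})\circ F(D(\alpha))=\theta^{-1}(g_j)$ en $\mathcal{D}$. Es decir, una familia es un cocono de $D$ si y solo si su imagen bajo $\theta^{-1}$ es un cocono de $F\circ D$, lo que hace de la tercera flecha una genuina biyección.

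Para cerrar, rastrearía el recorrido de un morfismo arbitrario $g:F(L)\to Y$ a lo largo de la cadena, y verificaría que termina en el cocono $(g\circ F(\iota_j))_j$; esto confirma que $F(L)$ junto con las patas $F(\iota_j)$ satisface exactamente la propiedad universal que caracteriza al colímite de $F\circ D$. La naturalidad de $\theta$ en $Y$ asegura, además, que toda la cadena es natural en $Y$, de manera que la biyección obtenida no es solo puntual sino un isomorfismo de funtores representables, que es lo que completa el argumento. Preveo que el único obstáculo real será redactar con limpieza la traducción de las condiciones de compatibilidad; el resto es un encadenamiento formal de propiedades universales.
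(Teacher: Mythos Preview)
Your argument is correct: the chain of natural bijections
\[
\mathcal{D}(F(L),Y)\;\cong\;\mathcal{C}(L,G(Y))\;\cong\;\mathrm{Cocono}(D,G(Y))\;\cong\;\mathrm{Cocono}(F\circ D,Y)
\]
is the standard proof, and you handle the one delicate point (that naturality of $\theta$ in the first variable transports the cocone compatibility conditions) explicitly and correctly. The dualisation to obtain the statement about right adjoints and limits is also fine.

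There is nothing to compare against, however: the paper does not supply a proof of this theorem. It is stated with an immediate \verb|\qed| and the subsequent remark simply refers the reader to Mac~Lane (\S5) and Awodey (\S9). Your proof is precisely the argument one finds in those references, so in effect you have filled in what the paper chose to outsource.
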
\qed

\begin{remark}
Para una prueba ver $\S$5 de \cite{MacLane-1998}, o $\S 9$ de \cite{awodey2006category}.
\end{remark}


\section{Coecualizadores reflexivos}\label{sec-reflex-coeq}

\index{coequalizer@Coequalizer $Ceq(f,g)$}
En una categoría $\mathcal{C}$, el coecualizador de dos morfismos $f,g:X\to Y$
es el colímite sobre el diagrama formado por ellos. Se denota $Ceq(f,g)$. 

	\begin{equation}\label{dgm-coequalizer}
	\begin{gathered}
		\xymatrix{
		X \ar@<+0.5pc>[r]^-f \ar@<-0.5pc>[r]_-g& Y \ar[r]^-q& Ceq(f,g)
		}
	\end{gathered}
	\end{equation}

Equivalentemente, el coecualizador de $f$ y $g$ es un objeto inicial en la categoría de morfismos $l$ 
que ecualizan a la izquierda $f$ y $g$, es decir, $l\circ f=l\circ g$.
Nos interesa un tipo especial de coecualizadores, llamados coecualizadores reflexivos.
Ellos juegan un rol importante en la prueba de la existencia de pequeños colímites en la categoría de operads.

\begin{definition}\label{df-reflexive-diagram}
\index{reflexivepair@Reflexive pair}
Sea $\mathcal{D}_0$ la categoría generada por el diagrama,
	\begin{equation}
	\begin{gathered}
		\xymatrix{
		x_0 \ar@<+0.3pc>[rr]^-{i} \ar@<-0.3pc>[rr]_-{j}& & x_1 \ar@<-0.2pc>@/_1pc/[ll]_-{s}
		}
	\end{gathered}
	\end{equation}
donde las flechas satisfacen $is=1=js$. Para cada categoría $\mathcal{C}$, 
llamamos par reflexivo de $\mathcal{C}$, a cualquier diagrama en $\mathcal{C}$ de la categoría $\mathcal{D}_0$.
Es decir, un par reflexivo es un par de flechas paralelas que tienen una sección en común.
\end{definition}

\begin{proposition}\label{prop-reflexive-coeq}
Sean $f,g:X\to Y$ dos morfismos en la categoría $\mathcal{C}$.
Si existe un morfismo $s:Y\to X$ en $\mathcal{C}$ tal que $f\circ s=g\circ s=1_Y$ 
entonces el coecualizador de $f$ y $g$ (si existe) es isomorfo al colímite sobre el diagrama formado por $f$, $g$ y $s$.
\end{proposition}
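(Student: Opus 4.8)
El plan es comparar directamente los coconos sobre los dos diagramas en cuestión, aprovechando que un colímite no es más que un objeto inicial en la categoría de coconos sobre su diagrama (la misma caracterización que ya se usó para describir $Ceq(f,g)$). Llamemos $D_1$ al diagrama del par paralelo $f,g:X\to Y$ y $D_2$ al diagrama reflexivo que añade la sección $s:Y\to X$ con $fs=gs=1_Y$. Un cocono sobre $D_1$ con vértice $Z$ es un par de morfismos $a:X\to Z$ y $b:Y\to Z$ tales que $bf=a=bg$; un cocono sobre $D_2$ con el mismo vértice es un par $(a,b)$ que, además, satisface la condición $as=b$ proveniente de la flecha $s$. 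Lo primero que haría es observar que todo cocono sobre $D_2$ es, olvidando la condición asociada a $s$, un cocono sobre $D_1$.

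Luego probaría la inclusión recíproca, que es donde reside el contenido de la proposición. Dado un cocono $(a,b)$ sobre $D_1$, basta verificar que la igualdad $as=b$ es automática: usando $a=bf$ y la hipótesis $fs=1_Y$ se obtiene
\begin{equation*}
as=(bf)s=b(fs)=b\cdot 1_Y=b.
\end{equation*}
Por lo tanto, los coconos sobre $D_1$ y sobre $D_2$ son exactamente los mismos datos. Como, además, un morfismo de coconos $(Z,a,b)\to(Z',a',b')$ es en ambos casos un morfismo $h$ con $ha=a'$ y $hb=b'$, las categorías de coconos sobre $D_1$ y $D_2$ coinciden.

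Finalmente concluiría invocando la unicidad del objeto inicial: el coecualizador $Ceq(f,g)$ es el objeto inicial de la categoría de coconos sobre $D_1$ y, por la coincidencia de categorías recién establecida, es también objeto inicial de la categoría de coconos sobre $D_2$, esto es, un colímite del diagrama reflexivo. Como los colímites son únicos salvo isomorfismo canónico, se sigue el isomorfismo enunciado. El único paso con contenido real es la cuenta $as=(bf)s=b$, que es precisamente donde entra en juego la hipótesis $fs=1_Y$ sobre la sección; el resto es un seguimiento rutinario de las definiciones de cocono y de propiedad universal, por lo que no anticipo ninguna dificultad seria más allá de enunciar con cuidado la correspondencia entre ambas familias de coconos.
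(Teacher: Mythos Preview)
Tu propuesta es correcta. Tanto tú como el artículo se apoyan en el mismo cálculo clave ---que la condición $as=b$ es automática a partir de $a=bf$ y $fs=1_Y$--- pero lo empaquetan de forma distinta. El artículo asume la existencia del colímite $(B,\alpha,\beta)$ del diagrama reflexivo, verifica que $\beta$ coiguala a $f$ y $g$ para obtener una flecha $h:Ceq(f,g)\to B$, observa que $(Ceq(f,g),qf,q)$ es un cocono sobre el diagrama reflexivo (ahí aparece la cuenta $qfs=q$, que es tu $as=b$) para obtener $\overline{h}$ en la dirección opuesta, y finalmente demuestra que $h$ y $\overline{h}$ son inversos mutuos usando que $q$ y $\alpha$ son epimorfismos. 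Tu argumento es más directo: identificas de entrada que las dos categorías de coconos son literalmente la misma, de modo que sus objetos iniciales coinciden; así evitas por completo la construcción explícita de flechas comparadoras y el razonamiento con epimorfismos. Como beneficio adicional, tu enfoque da gratis la existencia del colímite del diagrama reflexivo a partir de la de $Ceq(f,g)$, mientras que el artículo la presupone sin comentario. Ambos caminos funcionan; el tuyo es más conceptual y económico.
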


\begin{proof}
Es claro que $q$ en el diagrama \ref{dgm-coequalizer} es un epimorfismo.
Sea $(B,\alpha:X\to B,\beta:Y\to B)$ el colímite sobre el diagrama formado por $f$, $g$ y $s$.
Entonces $\alpha$ y $\beta$ satisfacen $\beta f=\alpha=\beta g$ y $\alpha s=\beta$. También tenemos que 
$\alpha$ es un epimorfismo. De hecho, si $r,s:B\to Z$ son dos flechas tal que $r\alpha=s \alpha$
entonces $(Z,r\alpha f:X\to Z,r\alpha:Y\to Z)$ es un cocono sobre $f$, $g$ y $s$ (ya que $fs=1$),
lo cual implica por la propiedad universal de colímites que $r\alpha$ se factoriza de manera única por $r$
a traves de $\alpha$. Lo mismo aplica para $s\alpha$, pero $r\alpha=s\alpha$ entonces $r=s$ y
$\alpha$ es un epimorfismo. Para mostrar que $B$ y $Ceq(f,g)$ son isomorfos, primero note que
$\alpha$ ecualiza a la izquierda a $f$ y $g$. En efecto, $\alpha f=\beta=\alpha g$. Entonces existe una
única flecha $h:Ceq(f,g)\to B$ tal que $hq=\alpha$. Ahora, $(Ceq(f,g),qf:X\to Ceq(f,g),q:Y\to Ceq(f,g))$
es un cocono sobre $f$, $g$ y $s$, ya que $qfs=q1=q$. Entonces existe una única flecha
$\overline{h}:B\to Ceq(f,g)$ tal que $\overline{h}\alpha=q$ y $\overline{h}\beta=fq$.
Pero $q$ y $\alpha$ son epimorfismos, así que tenemos que $h\overline{h}\alpha=hq=\alpha$ implica
$h\overline{h}=1$  y que $\overline{h}hq=\overline{h}\alpha=q$ implica $\overline{h}h=1$.
Por lo tanto, $B$ y $Ceq(f,g)$ son isomorfos.
\end{proof}

\begin{remark}
La proposición \ref{prop-reflexive-coeq} dice que el morfismo $s$ no cambia el coecualizador.
\end{remark}

\begin{definition}\label{df-reflexive-coeq}
\index{reflexicecoequalizer@Reflexive coequalizer}
Considere $f,g:X\to Y$ dos morfismos en una categoría $\mathcal{C}$.
Si existe un morfismo $s:Y\to X$ in $\mathcal{C}$ tal que $f\circ s=g\circ s=1_Y$ entonces
$Ceq(f,g)$ es llamado coecualizador reflexivo de $f$ y $g$.
\end{definition}

\begin{definition}\label{final-functor}
\index{finalfunctor@Final functor}
Considere $F:C\to D$ un funtor covariante.
$F$ se dice final si satisface las siguientes condiciones para cada objeto $X\in \mathcal{D}$.
\begin{enumerate}
\item Existe un morfismo desde $X$ hasta un objeto de la forma $F(Y)$.
\item Para cada par de tales morfismos desde $X$, $\alpha:X\to F(Y)$ y $\alpha':X\to F(Y')$,
existe una secuencia finita  $g_1,\ldots,g_k$ de morfismos de $\mathcal{C}$
haciendo el siguiente diagrama conmutativo.

\begin{equation}
\begin{gathered}
\xymatrix{	
&&X \ar[dll]_-{\alpha} \ar[dl] \ar[dr] \ar[drr]^-{\alpha'}&&\\
F(Y)\ar[r]_-{F(g_1)} & F(Y_1) \ar@{<-}[r]_-{F(g_2)}& \cdots &F(Y_{k-1}) \ar@{<-}[l]^-{F(g_{k-1})}
\ar@{<-}[r]_-{F(g_k)}& F(Y')
}
\end{gathered}
\end{equation}
\end{enumerate}
\end{definition}

\begin{remark}
Otra forma de definir un funtor $F:\mathcal{C}\to \mathcal{D}$ como final es diciendo que para cada
$X\in \mathcal{D}$ la coma categoría $X/F$ es no vacía y conexa (ver \cite{MacLane-1998}, $\S$9).
\end{remark}

\begin{proposition}\label{prop-D0-final}
Considere la categoría $\mathcal{D}_0$ de la definición \ref{df-reflexive-diagram}.
Entonces, para cada $n\geq 1$, el funtor diagonal desde $\mathcal{D}_0$
a la categoría producto $\mathcal{D}_0^n$, es final.
\end{proposition}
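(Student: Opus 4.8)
El plan es reducir la finalidad a la caracterización de la observación posterior a la definición \ref{final-functor}: basta probar que, para cada objeto $\mathbf{X}=(z_1,\dots,z_n)$ de $\mathcal{D}_0^n$ (con cada $z_k\in\{x_0,x_1\}$), la coma categoría $\mathbf{X}/\Delta$ es no vacía y conexa, donde $\Delta\colon\mathcal{D}_0\to\mathcal{D}_0^n$ denota el funtor diagonal. Para ello, lo primero sería describir explícitamente $\mathcal{D}_0$. Usando la convención de composición de funciones (como en la proposición \ref{prop-reflexive-coeq}), las relaciones $i\circ s=j\circ s=1_{x_1}$ obligan a que los conjuntos de morfismos sean finitos: $\mathrm{Hom}(x_1,x_1)=\{1_{x_1}\}$, $\mathrm{Hom}(x_1,x_0)=\{s\}$, $\mathrm{Hom}(x_0,x_1)=\{i,j\}$ y $\mathrm{End}(x_0)=\{1_{x_0},e,f\}$, con $e=s\circ i$ y $f=s\circ j$ idempotentes. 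Registraría además las identidades clave $i\circ e=i$, $j\circ e=i$, $i\circ f=j$ y $j\circ f=j$, que se deducen de inmediato de $i\circ s=j\circ s=1_{x_1}$, pues por ejemplo $j\circ e=j\circ s\circ i=(j\circ s)\circ i=i$.

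Un objeto de $\mathbf{X}/\Delta$ es un par $(y,(\alpha_k)_k)$ con $y\in\{x_0,x_1\}$ y $\alpha_k\colon z_k\to y$, y un morfismo hacia $(y',(\alpha'_k)_k)$ es una flecha $g\colon y\to y'$ de $\mathcal{D}_0$ con $g\circ\alpha_k=\alpha'_k$ para todo $k$. Para la no vacuidad definiría el objeto canónico $O=(x_1,(\iota_k)_k)$, tomando $\iota_k=i$ si $z_k=x_0$ y $\iota_k=1_{x_1}$ si $z_k=x_1$; como todo objeto de $\mathcal{D}_0$ admite una flecha hacia $x_1$, el objeto $O$ está bien definido y $\mathbf{X}/\Delta$ es no vacía. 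La estrategia para la conexidad sería entonces probar que todo objeto se conecta con $O$ mediante un zigzag corto, lo que basta para la conexidad por transitividad.

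La conexidad se haría en dos pasos. Primero, todo objeto con $y=x_0$ se reduce a uno con $y=x_1$: la flecha $i\colon x_0\to x_1$ induce un morfismo $(x_0,(\alpha_k)_k)\to(x_1,(i\circ\alpha_k)_k)$ en la coma categoría, así que alcanza con conectar los objetos de la forma $(x_1,(\alpha_k)_k)$. Segundo, conectaría cualquier $(x_1,(\alpha_k)_k)$ con $O$ mediante un único \emph{span} de vértice $(x_0,(\beta_k)_k)$, donde pondría $\beta_k=e$ si $z_k=x_0$ y $\alpha_k=i$, $\beta_k=1_{x_0}$ si $z_k=x_0$ y $\alpha_k=j$, y $\beta_k=s$ si $z_k=x_1$. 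Verificando coordenada a coordenada con las identidades de arriba, la flecha $j$ da un morfismo $(x_0,(\beta_k)_k)\to(x_1,(\alpha_k)_k)$ y la flecha $i$ da un morfismo $(x_0,(\beta_k)_k)\to O$, de modo que $(x_1,(\alpha_k)_k)$ y $O$ quedan unidos por el zigzag $(x_1,(\alpha_k)_k)\leftarrow(x_0,(\beta_k)_k)\to O$. El punto delicado, y en el que reside todo el contenido de la prueba, es precisamente la elección del vértice $(x_0,(\beta_k)_k)$ y de las dos patas $j$ e $i$ de modo que un \emph{solo} span reproduzca simultáneamente $(\alpha_k)_k$ y $(\iota_k)_k$ en cada coordenada; esto funciona gracias a que el idempotente $e=s\circ i$ satisface $i\circ e=j\circ e=i$, lo que permite que las dos patas absorban la ambigüedad entre $i$ y $j$. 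Con ambos pasos, toda componente de $\mathbf{X}/\Delta$ contiene a $O$, la coma categoría es conexa y no vacía, y por la observación citada $\Delta$ es final para todo $n\geq 1$.
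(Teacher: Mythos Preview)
Your proof is correct and follows the same high-level plan as the paper: non-emptiness of $\mathbf{X}/\Delta$ via a morphism to $\Delta(x_1)$, then connectedness by a length-two zigzag through an object lying over $\Delta(x_0)$. The paper is much terser and, as literally written, its zigzag does not commute: it connects two arbitrary $\alpha,\alpha':X\to D(x_1)$ through a single $\beta:X\to D(x_0)$ with $\beta_k=1_{x_0}$ whenever $z_k=x_0$, using $D(s)$ on both legs; commutativity would force $s\circ\alpha_k=1_{x_0}$, but in the free category $\mathcal{D}_0$ one has $s\circ i=e$ and $s\circ j=f$, neither equal to $1_{x_0}$.

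Your argument sidesteps this precisely by the two refinements you emphasize: you reverse the direction of the zigzag (a span with legs $j$ and $i$ instead of a cospan with both legs $D(s)$), and you choose $\beta_k\in\{e,1_{x_0}\}$ depending on whether $\alpha_k=i$ or $\alpha_k=j$, using the identity $j\circ e=i$. So your explicit computation of the hom-sets of $\mathcal{D}_0$ and your use of the idempotent $e=s\circ i$ are not cosmetic; they are exactly what is needed to make the connecting diagram commute in every coordinate, including those with $z_k=x_0$. Your preliminary reduction of objects with $y=x_0$ to ones with $y=x_1$ (by post-composing with $i$) is also a step the paper omits but which is needed for the full connectedness claim.
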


\begin{proof}
Sea $D:\mathcal{D}_0\to \mathcal{D}_0^n$ el funtor diagonal.
Tome $X$ un objeto de $\mathcal{D}_0^n$, entonces tiene la forma $X=(x_{i_1},\ldots,x_{i_n})$,
con $i_j\in{0,1}$. Existe un morfismo $f$ de $X$ a $D(x_1)$
dado por $f=(f_{i_1},\ldots,f_{i_n})$, donde $f_{i_j}=1_{x_1}$ si $i_j=1$ y $f_{i_j}=f$ si $i_j=0$.
Note que aún se puede tener un morfismo de $X$ a $D(x_1)$, al tomar arbitrariamente $f$ de $g$ en las entradas
$f_{i_j}$ cuando $i_j=0$.
Pero el único morfismo de $X$ a $D(x_0)$ está dado por $s=(s_{i_1},\ldots,s_{i_n})$,
donde $s_{i_j}=1_{x_0}$ si $i_j=0$ y $s_{i_j}=s$ si $i_j=1$.
Ahora se verifica la segunda condición en la definición \ref{final-functor}.
Tome $\alpha,\alpha'$ dos morfismos de  $X$ a $D(x_1)$. Sea
$\beta=(b_{i_1},b_{i_k})$ el morfismo de $X$ a $D(x_0)$ 
definido por $b_{i_j}=s$ si $i_j=1$ y $b_{i_j}=1_{x_0}$ si $i_j=0$.
Entonces el siguiente diagrama es conmutativo.

\begin{equation}
\begin{gathered}
\xymatrix{
&X \ar[ld]_-{\alpha} \ar[rd]^-{\alpha'} \ar[d]^-{\beta}&\\
D(x_1) \ar[r]_-{D(s)}& D(x_0)& D(x_1)\ar[l]^-{D(s)}
}
\end{gathered}
\end{equation}
Esto es suficiente para mostrar que $D$ es un funtor final.
\end{proof}

Los funtores finales son útiles para calcular colímites, como lo muestra la siguiente proposición.
Para una prueba, ver \cite{MacLane-1998}, $\S$9.

\begin{proposition}\label{prop-final-functors}
Considere $F:\mathcal{D}\to \mathcal{C}$ un diagrama en la categoría $\mathcal{C}$ y $I:\mathcal{D'}\to \mathcal{D}$
un funtor final tal que el colímite de $F\circ I$ existe. 
Entonces el colímite de $F$ existe y es canónicamente isomorfo al colímite de $F\circ I$.
\end{proposition}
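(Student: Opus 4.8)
El plan es demostrar que la precomposición con $I$ establece una biyección entre los coconos sobre $F$ y los coconos sobre $F\circ I$ que comparten el mismo vértice, y que esta biyección es natural en el vértice; de ello se sigue que ambos diagramas representan el mismo funtor y, por tanto, tienen colímites canónicamente isomorfos. Recuerdo que un cocono sobre $F$ con vértice $Z$ es una familia $\{\mu_D:F(D)\to Z\}_{D\in\mathcal{D}}$ que satisface $\mu_{D'}\circ F(f)=\mu_D$ para todo $f:D\to D'$ en $\mathcal{D}$, y que la restricción $(\mu\cdot I)_{D'}:=\mu_{I(D')}$ produce un cocono sobre $F\circ I$ con el mismo vértice. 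Todo el argumento se reduce a mostrar que $\mu\mapsto\mu\cdot I$ es biyectiva.

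Para la inyectividad bastaría la primera condición de la definición \ref{final-functor}: si $\mu$ y $\nu$ restringen al mismo cocono, fijo $D\in\mathcal{D}$, tomo un morfismo $\alpha:D\to I(D')$ garantizado por dicha condición y uso la relación de cocono para escribir $\mu_D=\mu_{I(D')}\circ F(\alpha)=\nu_{I(D')}\circ F(\alpha)=\nu_D$.

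La sobreyectividad es la parte sustancial y el principal obstáculo. Dado un cocono $\lambda$ sobre $F\circ I$ con vértice $Z$, para cada $D\in\mathcal{D}$ escojo $\alpha:D\to I(D')$ y defino $\mu_D:=\lambda_{D'}\circ F(\alpha)$. El punto delicado es verificar que $\mu_D$ no depende de la elección de $\alpha$ ni de $D'$, y es aquí donde interviene la segunda condición de la definición \ref{final-functor}: dadas dos elecciones $\alpha:D\to I(D')$ y $\alpha':D\to I(D'')$, esta provee un zigzag $g_1,\ldots,g_k$ de morfismos de $\mathcal{D}'$ cuyas imágenes bajo $I$ hacen conmutar el diagrama con vértice $D$. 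Aplicando $F$ y usando, en cada eslabón del zigzag, la compatibilidad de $\lambda$ a lo largo de $F(I(g_i))$ junto con la conmutatividad que relaciona los morfismos $\beta_{i-1}$ y $\beta_i$ de $D$ hacia $I(D'_{i-1})$ e $I(D'_i)$ con $I(g_i)$, obtendría una cadena de igualdades telescópicas $\lambda_{D'_{i-1}}\circ F(\beta_{i-1})=\lambda_{D'_i}\circ F(\beta_i)$ que, concatenadas, dan $\lambda_{D'}\circ F(\alpha)=\lambda_{D''}\circ F(\alpha')$. Una vez garantizada la buena definición, comprobar que $\mu$ es un cocono sobre $F$ y que $\mu\cdot I=\lambda$ es rutinario: para $f:D_1\to D_2$ compongo una elección para $D_2$ con $f$, y para la igualdad $\mu_{I(D')}=\lambda_{D'}$ tomo $\alpha=I(1_{D'})$.

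Finalmente traduciría esta biyección de coconos al enunciado sobre colímites. Si $(L,\lambda)$ es el colímite de $F\circ I$, la sobreyectividad produce un cocono $\mu$ sobre $F$ con vértice $L$ tal que $\mu\cdot I=\lambda$, y verificaría que $(L,\mu)$ cumple la propiedad universal del colímite de $F$: todo cocono $\nu$ sobre $F$ con vértice $Z$ se restringe a un cocono sobre $F\circ I$, la propiedad universal de $\lambda$ entrega un único $h:L\to Z$ con $h\circ\lambda=\nu\cdot I$, y la inyectividad de la restricción fuerza $h\circ\mu=\nu$ así como la unicidad de $h$. Esto exhibe a $L$ como colímite de $F$, canónicamente isomorfo al de $F\circ I$.
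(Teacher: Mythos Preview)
Your argument is correct and is essentially the standard proof: establishing that restriction along $I$ induces a natural bijection on cocones, using condition~1 of Definition~\ref{final-functor} for injectivity and the zigzag condition~2 for well-definedness of the inverse, then deducing the universal property. The paper itself does not supply a proof of this proposition; it simply refers the reader to Mac~Lane, \S9, where precisely this argument appears. So your proposal fills in what the paper omits, and there is no meaningful difference in approach to compare.
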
\qed

\section{Colimites de operads}\label{sec-amalgamated Sum Operads}

En esta sección vamos a mostrar que la categoría de operads tiene todos colímites 
sobre diagramas pequeños. 

El producto tensorial de $\dga$-módulos preserva todos los colímites pequeños en cada componente,
entonces satisface el siguiente lema (\cite{fresse-homoperad-2016}).

\begin{lemma}\label{lemma-preserve-coeq}
Sea $F:\mathcal{C}^{n}\to \mathcal{C}$ un funtor covariante.
Si $F$ preserva coecualizadores reflexivos en cada componente,
entonces $F$ preserva coecualizadores reflexivos en $C^n$. 
Es decir, si para cada $1\leq i\leq n$ y cada diagrama reflexivo $X_i \coeq Y_i$ en $\mathcal{C}$,
el morfismo dado por la propiedad universal de coecualizadores 
desde el coecualizador del diagrama en $\mathcal{C}$,

\begin{equation}
\begin{gathered}
F(A_1,\ldots,A_{i-1},X_i,A_{i+1},\ldots,A_n)\coeq F(A_1,\ldots,A_{i-1},Y_i,A_{i+1},\ldots,A_n)
\end{gathered}
\end{equation}

a $F(A_1,\ldots,A_{i-1},Ceq(X_i\coeq Y_i),A_{i+1},\ldots,A_n)$,
es un isomorfismos,
entonces para cada colección de diagramas reflexivos $\{X_i\coeq Y_i\}_{1\leq i\leq n}$
el morfismo desde el coecualizador del diagrama en $\mathcal{C}$,
\begin{equation}
\begin{gathered}
F(X_1,\ldots,X_n)\coeq F(Y_1,\ldots,Y_n)
\end{gathered}
\end{equation}
a $F(Ceq(X_1\coeq Y_1),\ldots,Ceq(X_n\coeq Y_n))$
es un isomorfismo.
\end{lemma}

\begin{proof}
La colección de diagramas reflexivos $\{X_i\coeq Y_i\}_{1\leq i\leq n}$
define una colección de funtores $\{T_i:\mathcal{D}_0\to \mathcal{C}\}_{1\leq i\leq n}$.
Se usa la notación $\underset{\alpha \in \mathcal{D}_0}{\text{colim}\,}T_i(\alpha)$ para $Ceq(X_i \coeq Y_i)$.
Así, la hipotesis puede escribirse como
\begin{equation}
\begin{gathered}
\underset{\alpha \in \mathcal{D}_0}{\text{colim}\,}F(A_1,\ldots,A_{i-1},T_i(\alpha),A_{i+1},\ldots,A_n)
\underset{\longrightarrow}{\cong} 
F(A_1,\ldots,A_{i-1},\underset{\alpha \in \mathcal{D}_0}{\text{colim}\,}T_i(\alpha),A_{i+1},\ldots,A_n)
\end{gathered}
\end{equation}

Por proposición \ref{prop-D0-final}, la diagonal $D:\mathcal{D}_0\to \mathcal{D}_0^n$
es un funtor final. Considere el funtor $T_1\times \cdots \times T_n:\mathcal{D}_0^n\to \mathcal{C}^n$.
Entonces, proposición \ref{prop-final-functors} dice que existe un isomorfismo
desde el colímite de $F(T_1\times \cdots \times T_n)D$ al colímite de $F(T_1\times \cdots \times T_n)$,
y tenemos

\begin{align}
&Ceq(F(X_1,\ldots,X_n)\coeq F(Y_1,\ldots,Y_n)) \nonumber\\
&=\underset{\alpha\in \mathcal{D}_0}{\text{colim}\,}
F(T_1\times \cdots \times T_n)D(\alpha)\\
&\underset{\longrightarrow}{\cong}\underset{(\alpha_1,\ldots,\alpha_n)\in \mathcal{D}_0^n}{\text{colim}\,}F(T_1(\alpha_1),
\ldots,T_n(\alpha_n))&& (\text{by \ref{prop-final-functors}})\\
&\cong \underset{\alpha_1\in \mathcal{D}_0}{\text{colim}\,}\ldots \underset{\alpha_n\in \mathcal{D}_0}{\text{colim}\,}
F(T_1(\alpha_1),\ldots,T_n(\alpha_n))\\
& \underset{\longrightarrow}{\cong} F(\underset{\alpha_1\in \mathcal{D}_0}{\text{colim}\,}T_1(\alpha_1),\ldots,
\underset{\alpha_n\in \mathcal{D}_0}{\text{colim}\,}T_n(\alpha_n)) && (\text{by hypothesis})\\
&= F(Ceq(X_1\coeq Y_1),\ldots,Ceq(X_n\coeq Y_n)) \nonumber
\end{align}
\end{proof}

En la siguiente proposición se construye un operad usando la definición clásica de operads en \ref{df-operad}.

\begin{proposition}\label{prop-coeq-reflex-OP}
En la categoría $\mathcal{OP}$,  el funtor de olvido $U:\mathcal{OP}\to \mathbb{S}$-Mod 
crea coecualizadores reflexivos.
\end{proposition}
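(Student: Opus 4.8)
El plan es el siguiente. Dado un par reflexivo de morfismos de operads $f,g:\mathcal{P}\to\mathcal{Q}$ en $\mathcal{OP}$, primero construyo el coecualizador componente a componente en $\mathbb{S}$-Mod: para cada $n$ sea $q_n:\mathcal{Q}(n)\to\mathcal{R}(n)$ el coecualizador de $f_n,g_n$ en $\dgacat$, que existe pues $\dgacat$ tiene todos los colímites pequeños. Con las acciones de $\Sigma_n$ inducidas (que hacen los $q_n$ equivariantes, ya que $f_n,g_n$ lo son), la colección $\mathcal{R}=\{\mathcal{R}(n)\}$ junto con $q=\{q_n\}$ es el coecualizador de $U(f),U(g)$ en $\mathbb{S}$-Mod. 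Debo dotar a $\mathcal{R}$ de estructura de operad de modo que $q$ sea morfismo de operads, mostrar que dicha estructura es única, y que $q$ es el coecualizador en $\mathcal{OP}$.

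El paso central, y el verdadero obstáculo, es definir la composición
\[
\bar\gamma:\mathcal{R}(h)\otimes\mathcal{R}(i_1)\otimes\cdots\otimes\mathcal{R}(i_h)\to\mathcal{R}(n).
\]
Aquí uso el lema \ref{lemma-preserve-coeq}. El producto tensorial de $\dga$-módulos preserva coecualizadores reflexivos en cada componente, de modo que por el lema \ref{lemma-preserve-coeq} el objeto $\mathcal{R}(h)\otimes\mathcal{R}(i_1)\otimes\cdots\otimes\mathcal{R}(i_h)$ es el coecualizador del par reflexivo obtenido al aplicar el producto tensorial a los pares $\{\mathcal{P}(i)\coeq\mathcal{Q}(i)\}$, con morfismo cociente $q_h\otimes q_{i_1}\otimes\cdots\otimes q_{i_h}$. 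Como $f$ y $g$ son morfismos de operads, conmutan con la composición de $\mathcal{Q}$, así que $q_n\circ\gamma_{\mathcal{Q}}$ ecualiza a la izquierda ese par reflexivo; la propiedad universal del coecualizador proporciona entonces un único $\bar\gamma$ con $\bar\gamma\circ(q_h\otimes\cdots\otimes q_{i_h})=q_n\circ\gamma_{\mathcal{Q}}$. La unidad se define como $\bar\eta=q_1\circ\eta_{\mathcal{Q}}$. Es precisamente la hipótesis de reflexividad la que permite que el producto tensorial de los coecualizadores siga siendo un coecualizador, y es la razón por la que el enunciado se restringe al caso reflexivo.

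Luego verifico los axiomas de la definición \ref{df-operad} para $(\mathcal{R},\bar\eta,\bar\gamma)$. La clave es que los morfismos $q_h\otimes\cdots\otimes q_{i_h}$ son epimorfismos, por ser coecualizadores (de nuevo gracias al lema \ref{lemma-preserve-coeq}). Cada diagrama de asociatividad, unidad y equivarianza para $\mathcal{R}$ se deduce del correspondiente diagrama para $\mathcal{Q}$: ambos caminos del diagrama, precompuestos con el epimorfismo adecuado, coinciden por la conmutatividad en $\mathcal{Q}$ y la definición de $\bar\gamma$ y $\bar\eta$, de modo que cancelando el epimorfismo se obtiene la conmutatividad buscada. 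Esta parte es rutinaria aunque algo tediosa.

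Finalmente establezco las dos afirmaciones contenidas en ``crear''. La unicidad de la estructura de operad sobre $\mathcal{R}$ es inmediata: para que $q$ sea morfismo de operads se necesita $\bar\gamma\circ(q_h\otimes\cdots\otimes q_{i_h})=q_n\circ\gamma_{\mathcal{Q}}$ y $\bar\eta=q_1\circ\eta_{\mathcal{Q}}$, y como $q_h\otimes\cdots\otimes q_{i_h}$ es epimorfismo, estas ecuaciones determinan $\bar\gamma$ y $\bar\eta$ de manera única. Para ver que $q:\mathcal{Q}\to\mathcal{R}$ es el coecualizador en $\mathcal{OP}$, tomo un operad $\mathcal{S}$ y un morfismo de operads $\phi:\mathcal{Q}\to\mathcal{S}$ con $\phi f=\phi g$; su imagen en $\mathbb{S}$-Mod se factoriza de modo único como $U(\phi)=\psi\circ q$ con $\psi:\mathcal{R}\to U(\mathcal{S})$ en $\mathbb{S}$-Mod. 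Resta ver que $\psi$ es morfismo de operads, lo cual se sigue otra vez por cancelación del epimorfismo $q_h\otimes\cdots\otimes q_{i_h}$, ya que $\phi$ conmuta con las composiciones y preserva la unidad y $q$ es morfismo de operads. Así $q$ es el único morfismo de operads con la propiedad universal del coecualizador, y $U$ crea coecualizadores reflexivos.
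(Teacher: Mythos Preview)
Your proof is correct and follows essentially the same strategy as the paper: form the coequalizer aritywise in $\mathbb{S}$-Mod, then use Lemma~\ref{lemma-preserve-coeq} to transport the operad composition to the quotient. The only cosmetic difference is that the paper phrases the key step via the explicit isomorphism $\psi$ between $Coeq\bigl(P(h)\otimes\cdots\coeq Q(h)\otimes\cdots\bigr)$ and $\mathcal{R}(h)\otimes\cdots\otimes\mathcal{R}(i_h)$, defining $\gamma_{\mathcal{R}}=\overline{\gamma}\circ\psi^{-1}$, whereas you invoke directly the universal property of that same coequalizer; these are the same argument. Your write-up is actually more complete than the paper's on the ``creates'' part: the paper simply asserts that the axioms and the universal property in $\mathcal{OP}$ are ``not complicated to verify'', while you spell out the epimorphism-cancellation argument for the axioms, the uniqueness of the induced structure, and the factorization of any $\phi:\mathcal{Q}\to\mathcal{S}$ with $\phi f=\phi g$ through $q$ as an operad map.
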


\begin{proof}
Sea $\mathcal{P}\coeq \mathcal{Q}$ un par reflexivo en $\mathcal{OP}$. 
Se contruirá el coecualizador reflexivo $\mathcal{O}$ del diagrama en $\mathcal{OP}$. 
Para eso, primero se definen los componentes del operad $\mathcal{O}$ como $\mathcal{O}(n)=Coeq(P(n)\coeq Q(n))$.
Este coecualizador existe debido a que $\mathbb{S}$-Mod como $\dga$-Mod
tienen todos los pequeños límites. 
Para definir la composición $\gamma$ de $\mathcal{O}$, considere el siguiente morfismo de $\dga$-módulos.

\begin{equation}
\begin{gathered}
\begin{tikzpicture}
\foreach \h in {2}{
	\foreach \l in {0.5}{
	\node at (0,\h) {$Coeq\left[P(k)\otimes P(i_1)\otimes \cdots P(i_k)\coeq Q(k)
	\otimes Q(i_1)\otimes \cdots \otimes Q(i_k)\right]$} ;
	\node at (0,0) {$Coeq\left[P(k)\coeq Q(k)\right] \otimes 
	Coeq\left[P(i_k)\coeq Q(i_k)\right]\otimes \cdots \otimes 
	Coeq\left[P(i_k)\coeq Q(i_k)\right]$};
	\draw [->](0,\h-\l) to (0,+\l);
	\node at (\l,\h/2) {$\psi$};
 }
}
\end{tikzpicture}
\end{gathered}
\end{equation}

Por lema \ref{lemma-preserve-coeq}, este morfismo es un isomorfismo, 
entonces, se puede tomar su inversa $\psi^{-1}$ y definir $\gamma$ como la siguiente composición.

\begin{equation}
\begin{gathered}
\begin{tikzpicture}
\foreach \h in {2}{
	\foreach \l in {0.5}{
	\node at (0,\h+1) {$O(k)\otimes O(i_1)\otimes \cdots \otimes O(i_k)=$};
	\node at (0,\h) {$Coeq\left[P(k)\coeq Q(k)\right] \otimes 
	Coeq\left[P(i_k)\coeq Q(i_k)\right]\otimes \cdots \otimes 
	Coeq\left[P(i_k)\coeq Q(i_k)\right]$};
	\node at (0,0) {$Coeq\left[P(k)\otimes P(i_1)\otimes \cdots P(i_k)\coeq Q(k)
	\otimes Q(i_1)\otimes \cdots \otimes Q(i_k)\right]$} ;
	\node at (0,-\h) {$Coeq \left[ 
	P(i_1+\cdots i_k) \coeq Q(i_1+\cdots i_k)
 	\right]=O(i_1+\cdots +i_k)$};
	\draw [->](0,\h-\l) to (0,+\l);
	\node at (\l,\h/2) {$\psi^{-1}$};
	\draw [->](0,-\l) to (0,-\h+\l);
	\node at (\l,-\h/2) {$\overline{\gamma}$};
 }
}
\end{tikzpicture}
\end{gathered}
\end{equation}

Donde $\overline{\gamma}$ 
es el morfismo inducido por la composición de los operads
$\mathcal{P}$ y $\mathcal{Q}$. Para definir la unidad de $\mathcal{O}$, 
considere el siguiente diagrama conmutativo obtenido de $\mathcal{P}\coeq \mathcal{Q}$ 
y las propiedades del coecualizador.

\begin{equation}
\begin{gathered}
\xymatrix@R=0.1pc{
& P(1)\ar@<1ex>[dd] \ar@<-1ex>[dd] \ar[rrd]^-{i_{P(1)}}\\
\field \ar[ur]^-{\eta_\mathcal{P}} \ar[dr]_-{\eta_\mathcal{Q}}& && O(1)\\
& Q(1) \ar@/_2pc/[uu] \ar[rru]_-{i_{Q(1)}}
}
\end{gathered}
\end{equation}

Entonces la unidad para $\mathcal{O}$ es definida como la composición $i_{P(1)}\eta_{\mathcal{P}}:\field \to O(1)$.
Note que la eleccióne $i_{Q(1)}\eta_{\mathcal{Q}}$ da el mismo resultado, 
como consecuencia de la existencia de las flechas reflexivas.
No es complicado verificar que $\mathcal{O}$ con esta estructura satisface
los axiomas de operads y la propiedad universal para coecualizadores.
\end{proof}

\begin{proposition}\label{prop-ext-morphism-coproduct}
Considere $\{P_i\}_{i\in I}$ y $\{Q_i\}_{i\in I}$ dos pequeñas colecciones de objetos en la
categoría $\mathcal{C}$ tal que los colímites
$\underset{i\in I}{\text{colim}\,} P_i$ y $\underset{i\in I}{\text{colim}\,} Q_i$ existan.
Denote por $i\in I$ el cocono de aristas $p_i:P_i \to \underset{i\in I}{\text{colim}\,} P_i$ y
$q_i:Q_i\to \underset{i\in I}{\text{colim}\,} Q_i$.
\begin{enumerate}
\item Toda colección de morfismos $f_i:P_i\to \underset{i\in I}{\text{colim}\,} Q_i$, $i\in I$, 
determina un morfismo
$\overline{f}:\underset{i\in I}{\text{colim}\,} P_i\to \underset{i\in I}{\text{colim}\,} Q_i$,
tal que $f_i=\overline{f}\circ p_i$ para cada $i\in I$.
\item Cada colección de morfismos $f_i:P_i\to Q_i$, $i\in I$, determina un morfismo
$f:\underset{i\in I}{\text{colim}\,} P_i\to \underset{i\in I}{\text{colim}\,} Q_i$,
tal que $q_i\circ f=f_i \circ p_i$ para cada $i\in I$.
\end{enumerate}
\end{proposition}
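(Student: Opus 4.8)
The plan is to derive both statements directly from the universal property of the colimit $\underset{i\in I}{\text{colim}\,} P_i$. Since $\{P_i\}_{i\in I}$ is a collection of objects indexed by a set, the diagram it defines is discrete, so its colimit is the coproduct of the $P_i$, and the cocone edges $p_i:P_i\to \underset{i\in I}{\text{colim}\,} P_i$ carry no compatibility constraints among themselves. Consequently, giving a cocone from this diagram to an object $Z$ amounts to giving an arbitrary family of morphisms $P_i\to Z$, one for each $i\in I$: there are no commuting squares to verify, because the index category has no non-identity arrows. This observation is the engine of the whole argument.

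For the first part I would take $Z=\underset{i\in I}{\text{colim}\,} Q_i$ and regard the given family $f_i:P_i\to Z$ as a cocone over $\{P_i\}_{i\in I}$. By the remark above this family is automatically a cocone, so the universal property of $\underset{i\in I}{\text{colim}\,} P_i$ produces a unique morphism $\overline{f}:\underset{i\in I}{\text{colim}\,} P_i\to \underset{i\in I}{\text{colim}\,} Q_i$ factoring each $f_i$ through the corresponding edge, that is $f_i=\overline{f}\circ p_i$ for every $i\in I$.

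For the second part I would reduce to the first by post-composition: from a family $f_i:P_i\to Q_i$ I form the morphisms $q_i\circ f_i:P_i\to \underset{i\in I}{\text{colim}\,} Q_i$ and apply part~1 to them. This yields a morphism $f:\underset{i\in I}{\text{colim}\,} P_i\to \underset{i\in I}{\text{colim}\,} Q_i$ with $f\circ p_i=q_i\circ f_i$ for each $i\in I$, which is exactly the naturality square relating the two colimit cocones.

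There is essentially no obstacle here: the entire content is an unwinding of the universal property, and the single point worth stating explicitly is that the discreteness of the index category is what allows an arbitrary family $\{f_i\}$ to be treated as a cocone in part~1. The only care needed is bookkeeping, namely checking that the composite $q_i\circ f_i$ lands in the correct colimit so that part~1 applies verbatim when proving part~2.
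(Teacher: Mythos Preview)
Your proof is correct and follows essentially the same approach as the paper: both invoke the universal property of the coproduct $\underset{i\in I}{\text{colim}\,} P_i$ to obtain $\overline{f}$ in part~1, and both reduce part~2 to part~1 by post-composing with the cocone edges $q_i$. Your explicit remark that discreteness of the index category makes any family automatically a cocone is a nice clarification that the paper leaves implicit.
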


\begin{proof}
La colección $f_i:P_i\to \underset{i\in I}{\text{colim}\,} Q_i$, $i\in I$ 
exibe a $\underset{i\in I}{\text{colim}\,} Q_i$ 
como un cocono sobre el diagrama $\{P_i\}_{i\in I}$,
entonces por la propiedad universal de coproductos, 
existe un único morfismo en $\mathcal{C}$,
$\overline{f}:\underset{i\in I}{\text{colim}\,} P_i\to \underset{i\in I}{\text{colim}\,} Q_i$ 
tal que $f_i=\overline{f}\circ p_i$ para cada $i\in I$.

Para el segundo enunciado, componga cada $f_i:P_i\to Q_i$ con la respectiva arista del cocono
$q_i:Q_i\to \underset{i\in I}{\text{colim}\,} Q_i$, así se tiene la colección
$g_i:P_i\to \underset{i\in I}{\text{colim}\,} Q_i$, 
con $g_i=q_i\circ f_i$ para  $i\in I$.
Y aplicando la primera parte se obtiene que esto determina
$f=\overline{g}:\underset{i\in I}{\text{colim}\,} P_i\to \underset{i\in I}{\text{colim}\,} Q_i$.
\end{proof}

\begin{proposition}\label{prop-op-coproducts}
La categoría de operads tiene todos los pequeños límites.
\end{proposition}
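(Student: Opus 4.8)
El plan es demostrar que el funtor de olvido $U:\mathcal{OP}\to \mathbb{S}$-Mod \emph{crea} límites, es decir, que los límites en $\mathcal{OP}$ se calculan componente a componente en $\dgacat$. A diferencia de la situación para colímites, aquí no hará falta la maquinaria de coecualizadores reflexivos: el producto tensorial de la composición $\gamma$ aparece en la \emph{fuente}, y bastará con usar la propiedad universal del límite en el \emph{blanco}.

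Primero, dado un diagrama pequeño $D:\mathcal{J}\to \mathcal{OP}$, $j\mapsto \mathcal{P}_j$, definiría el operad candidato $\mathcal{L}$ componente a componente por $\mathcal{L}(n)=\underset{j\in\mathcal{J}}{\lim}\,\mathcal{P}_j(n)$, tomando el límite en $\dgacat$; éste existe porque $\dgacat$ (y por lo tanto $\mathbb{S}$-Mod) tiene todos los pequeños límites. Escribiría $\pi_j^{(n)}:\mathcal{L}(n)\to \mathcal{P}_j(n)$ para las proyecciones del límite. Luego dotaría a $\mathcal{L}$ de estructura de operad vía la propiedad universal: la unidad $\eta_{\mathcal{L}}:\field\to \mathcal{L}(1)$ se induce del cono de unidades $\{\eta_{\mathcal{P}_j}\}$ (compatible pues los morfismos de operads preservan la unidad); la acción de $\Sigma_n$ sobre $\mathcal{L}(n)$ se define componente a componente usando que las flechas de $D$ son equivariantes; y la composición $\gamma_{\mathcal{L}}$ se obtiene del cono
\begin{equation}
\begin{gathered}
\mathcal{L}(h)\otimes \mathcal{L}(i_1)\otimes \cdots \otimes \mathcal{L}(i_h)
\xrightarrow{\pi_j\otimes\cdots\otimes\pi_j}
\mathcal{P}_j(h)\otimes \mathcal{P}_j(i_1)\otimes \cdots \otimes \mathcal{P}_j(i_h)
\xrightarrow{\gamma_{\mathcal{P}_j}} \mathcal{P}_j(n),
\end{gathered}
\end{equation}
que es compatible con las flechas del diagrama porque cada morfismo de operads conmuta con $\gamma$; la propiedad universal de $\mathcal{L}(n)$ produce entonces un único $\gamma_{\mathcal{L}}$.

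Para verificar los axiomas de operad (la asociatividad \ref{dg-associ-operad}, la unidad \ref{dg-unit-operad} y la equivarianza \ref{dg-equiv-1-op}, \ref{dg-equiv-2-op}) usaría que la familia de proyecciones $\{\pi_j^{(n)}\}_{j}$ es conjuntamente mónica: dos morfismos con blanco $\mathcal{L}(n)$ coinciden si y solo si coinciden tras componer con cada $\pi_j^{(n)}$. Cada axioma es una igualdad de morfismos con blanco en algún $\mathcal{L}(n)$ que, al proyectar a $\mathcal{P}_j(n)$, se reduce al axioma correspondiente en $\mathcal{P}_j$, válido por hipótesis. De la misma construcción se sigue que cada proyección $\pi_j:\mathcal{L}\to \mathcal{P}_j$ es un morfismo de operads.

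Finalmente comprobaría la propiedad universal. Dado un operad $\mathcal{R}$ con un cono de morfismos de operads $f_j:\mathcal{R}\to \mathcal{P}_j$, la propiedad universal componente a componente entrega un único morfismo de $\dga$-módulos $f_n:\mathcal{R}(n)\to \mathcal{L}(n)$ tal que $\pi_j^{(n)}f_n=(f_j)_n$ para todo $j$; el mismo argumento de monicidad conjunta muestra que la colección $f=\{f_n\}$ respeta unidad, acciones y composición, es decir, es un morfismo de operads, y es el único con $\pi_j f=f_j$. Esto exhibe a $\mathcal{L}$ como el límite de $D$ en $\mathcal{OP}$. La parte más delicada será la definición de $\gamma_{\mathcal{L}}$, pero se resuelve sin dificultad porque sólo se requiere la propiedad universal del límite en el blanco, y no que el producto tensorial conmute con el límite, que es justamente lo que vuelve los límites más sencillos que los colímites en este contexto.
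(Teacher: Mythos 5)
Tu propuesta demuestra correctamente el enunciado tal y como está impreso, pero debes notar que ese enunciado contiene una errata: donde dice ``límites'' el artículo quiere decir ``colímites'' (la sección se titula \emph{Colimites de operads}, y la prueba del artículo construye el colímite de una familia pequeña $\{P_i\}_{i\in I}$, no su límite). Leído literalmente, tu argumento es esencialmente correcto y estándar: el funtor de olvido $U:\mathcal{OP}\to \mathbb{S}$-Mod crea límites; como $\gamma$ tiene el producto tensorial en la \emph{fuente}, la estructura de operad sobre $\mathcal{L}(n)=\lim_j \mathcal{P}_j(n)$ se induce por la propiedad universal del límite en el blanco, y tanto los axiomas como la propiedad universal se verifican por la monicidad conjunta de las proyecciones, como indicas. (Un detalle menor que convendría mencionar: en $\dgacat$ los productos no son los de los módulos subyacentes —se toman grado a grado y hay que ajustar las aumentaciones y coaumentaciones—, aunque la completitud que invocas es cierta.)

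Sin embargo, comparada con la prueba del artículo, tu ruta demuestra un teorema distinto y no se adapta al enunciado pretendido. Para colímites el método componente a componente fracasa en el punto que tú mismo señalas como ``lo que vuelve los límites más sencillos'': para definir $\gamma$ sobre $L(n)=\underset{j}{\text{colim}\,}\mathcal{P}_j(n)$ necesitarías que la flecha canónica $\underset{j}{\text{colim}\,}\bigl(\mathcal{P}_j(h)\otimes \mathcal{P}_j(i_1)\otimes \cdots \otimes \mathcal{P}_j(i_h)\bigr)\to L(h)\otimes L(i_1)\otimes \cdots \otimes L(i_h)$ fuera invertible, lo cual equivale a que el funtor diagonal del índice sea final; esto vale para pares reflexivos (proposición \ref{prop-D0-final}) pero falla para diagramas discretos, pues $\bigl(\bigoplus_j A_j\bigr)\otimes \bigl(\bigoplus_j B_j\bigr)\neq \bigoplus_j (A_j\otimes B_j)$. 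Por eso el coproducto de operads no se calcula componente a componente (es un objeto de tipo ``producto libre''), y por eso el artículo procede de otro modo: usa la adjunción libre--olvido $F\vdash U$, forma con \ref{prop-ext-morphism-coproduct} el par reflexivo $F\bigl(\underset{i}{\text{colim}\,}UFU(P_i)\bigr)\coeq F\bigl(\underset{i}{\text{colim}\,}U(P_i)\bigr)$ con sección $s=F(\beta)$, toma su coecualizador reflexivo —que existe porque $U$ crea coecualizadores reflexivos (proposición \ref{prop-coeq-reflex-OP}, apoyada en el lema \ref{lemma-preserve-coeq})— y verifica que ese coecualizador es el colímite buscado. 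Deberías, pues, o bien corregir el enunciado a ``colímites'' y rehacer la prueba siguiendo ese esquema, o bien conservar tu prueba como un resultado complementario de completitud, distinto del que el artículo demuestra.
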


\begin{proof}
Consideree $\{P_i\}_{i\in I}$ una colección pequeña de operadas. 
Con ella es posible construir un par reflexivo en $\mathcal{OP}$, 
y por proposición \ref{prop-coeq-reflex-OP}
su coecualizador reflexico existe en $\mathcal{OP}$. 
La úlitma parte de la prueba consiste en chequear que este coecualizador reflexico 
es el colímite de $\{P_i\}_{i\in I}$.

De $\{P_i\}_{i\in I}$ se obtiene en $\mathbb{S}$-Mod la colección
$\{U(P_i)\}_{i\in I}$, denote su colímite $\underset{i\in I}{\text{colim}\,}  U(P_i)$,
y $\alpha_i:U(P_i)\to \underset{i\in I}{\text{colim}\,}  U(P_i)$ su cocono de aristas.

Los morfismos $\alpha_i:U(P_i)\to \underset{i\in I}{\text{colim}\,}  U(P_i)$ 
inducen el morfismo
$UF(\alpha_i):UFU(P_i)\to UF(\underset{i\in I}{\text{colim}\,}  U(P_i))$, 
el cual determina el siguiente morfismo.

\begin{equation}
\begin{gathered}
\xymatrix{
\underset{i\in I}{\text{colim}\,}  UFU(P_i) \ar[r]^-{\underline{d_0}}&  UF(\underset{i\in I}{\text{colim}\,}  U(P_i))
}
\end{gathered}
\end{equation}

Considere la unidad $\epsilon$ y counidad $\eta$ de la adjunción $F\vdash U$,
y la siguiente composición en $\mathbb{S}$-Mod.

\begin{equation}\label{dg-d1-def}
\begin{gathered}
\xymatrix{
UFU(P_i) \ar[r]^{U(\epsilon_{P_i})} & U(P_i) \ar[r]^-{\alpha_i} & \underset{i\in I}{\text{colim}\,}  U(P_i) \ar[r]^-{\eta} & UF(\underset{i\in I}{\text{colim}\,}  U(P_i))
}
\end{gathered}
\end{equation}

Estas composiciones determinan el morfismo en $\mathbb{S}$-Mod,

\begin{equation}
\begin{gathered}
\xymatrix{
\underset{i\in I}{\text{colim}\,}  UFU(P_i) \ar[r]^{\underline{d_1}} & UF(\underset{i\in I}{\text{colim}\,}  U(P_i))
}
\end{gathered}
\end{equation}

Por la propiedad universal de operads libres $\underline{d_0}$ y $\underline{d_1}$
van a determinar los morfismos $d_0$ y $d_1$ en $\mathcal{OP}$ en el siguente diagrama conmutativo.

\begin{equation}
\begin{gathered}
\xymatrix@R=4pc@C=3pc{
UF \left( \underset{i\in I}{\text{colim}\,}  UFU(P_i) \right) \ar@<0.5ex>[rd]^{U(d_1)} \ar@<-0.5ex>[rd]_{U(d_0)} 
& F \left( \underset{i\in I}{\text{colim}\,}  UFU(P_i) \right) \ar@<0.5ex>[rd]^{d_1} \ar@<-0.5ex>[rd]_{d_0} &\\
\underset{i\in I}{\text{colim}\,}  UFU(P_i)\ar[u]^-{\eta} \ar@<0.5ex>[r]^{\underline{d_1}} \ar@<-0.5ex>[r]_{\underline{d_0}} 
& UF(\underset{i\in I}{\text{colim}\,}  U(P_i)) & F(\underset{i\in I}{\text{colim}\,}  U(P_i))
}
\end{gathered}
\end{equation}

Ahora se da la contracción  $s$ para $d_0$ y $d_1$. Con la counidad $\eta$ considere
los morfismos $\eta_{U(P_i)}:U(P_i)\to UFU(P_i)$. 
Por las propiedades del colímite
\ref{prop-ext-morphism-coproduct} ellos determinan un morfismo de $\mathbb{S}$-modules
$\beta:\underset{i\in I}{\text{colim}\,}  U(P_i)\to \underset{i\in I}{\text{colim}\,}  UFU(P_i)$, 
y se toma $s=F(\beta)$.
Así, se tiene el siguiente diagrama en $\mathcal{OP}$.

\begin{equation}\label{dg-coeq-refls}
\begin{gathered}
\xymatrix@C=4pc{
F \left( \underset{i\in I}{\text{colim}\,}  UFU(P_i) \right) \ar@<0.5ex>[r]^{d_1} \ar@<-0.5ex>[r]_{d_0} &
F(\underset{i\in I}{\text{colim}\,}  U(P_i)) \ar@/_2pc/[l]_-{s}
}
\end{gathered}
\end{equation}

Antes de toamr el coproducto reflexico de este diagrama, se debe de verificar que $d_1s=d_0s=1$.
Para mostrar esto solo hay que verificar que sus componentes definidas sobre $U(P_i)$,
$d_1s$ y $d_0s$, son ambas iguales a la identidad.

El morfismo $s$ es determinado por $\eta_{U(P_i)}:U(P_i)\to UFU(P_i)$,
$d_0$ por $UF(\alpha_i):UFU(P_i)\to UF(\underset{i\in I}{\text{colim}\,}  U(P_i))$. 
La naturalidad de $\eta$ hace el siguiente diagrama conmutativo.

\begin{equation}\label{dg-eta-nat-up}
\begin{gathered}
\xymatrix{
U(P_i) \ar[r]^-{\eta_{U(P_i)}} \ar[d]_-{\alpha_i}& UFU(P_i) \ar[d]^-{UF(\alpha_i)} \\
\underset{i\in I}{\text{colim}\,}  U(P_i) \ar[r]^-{\eta} & UF(\underset{i\in I}{\text{colim}\,}  U(P_i))
}
\end{gathered}
\end{equation}

Así, se tiene que 
$UF(\alpha_i)\eta_{U(P_i)}=\eta \alpha_i:U(P_i)\to UF(\underset{i\in I}{\text{colim}\,}  U(P_i))$,
lo cual induce la identidad sobre
$UF(\underset{i\in I}{\text{colim}\,}  U(P_i))$.
Para $d_1s$, $d_1$ es determinado por la composición \ref{dg-d1-def}, 
entonces se tiene que $d_1s$ es determinado por la composición
$\eta \alpha_i U(\epsilon_{P_i})\eta_{U(P_i)}$, which by the triangular equations
de la unidad y la counidad (proposición \ref{prop-triangular-eq-unit-counit}),
es igual a $\eta \alpha_i$, 
lo cual como antes, induce la identidad sobre $UF(\underset{i\in I}{\text{colim}\,}  U(P_i))$.
Entonces por proposición \ref{prop-coeq-reflex-OP} existe el coecualizador del diagrama \ref{dg-coeq-refls},

\begin{equation}
\begin{gathered}
\xymatrix@C=4pc{
F \left( \underset{i\in I}{\text{colim}\,}  UFU(P_i) \right) \ar@<0.5ex>[r]^{d_1} \ar@<-0.5ex>[r]_{d_0} &
F(\underset{i\in I}{\text{colim}\,}  U(P_i)) \ar@/_2pc/[l]_-{s} \ar[r]^-q& Q
}
\end{gathered}
\end{equation}

El operad $Q$ será el colimite de la colección $\{P_i\}_{i\in I}$.
Solo se debe de verificar la existencia de los morfismos de operads desde cada $P_i$ a $Q$
y la propiedad universal para los colímites.
Para hacer eso, primero vamos a ver la información que da el ser un coecualizador de $d_0$ y $d_1$.

Considere $R$ un operad y un morfismo de operads $f:F(\underset{i\in I}{\text{colim}\,}  U(P_i))\to R$ such that
$fd_0=fd_1$. Este morfismo está determinado por sus componentes 
$h_i:U(P_i)\to U(R)$ dadas por las composiciones,

\begin{equation}
\begin{gathered}
\xymatrix{
U(P_i)\ar[r]^-{\alpha_i} \ar@/_2pc/[rr]^-{h_i}& \underset{i\in I}{\text{colim}\,}  U(P_i) \ar[r]^-{\theta(f)} & U(R)
}
\end{gathered}
\end{equation}

Los morfismos $fd_0$ y $fd_1$ son determinados por los morfismos de $UFU(P_i)$ a $U(R)$ en $\mathbb{S}$-Mod,
entonces se describirá en términos de sus componentes la relación $fd_1=fd_0$.
En el caso de $fd_0$ recuerde que $d_0$ está determinado por los morfismos
$UF(\alpha_i):UFU(P_i)\to UF(\underset{i\in I}{\text{colim}\,}  U(P_i))$ 
y considere el siguiente diagrama conmutativo.

\begin{equation}\label{dg-big-triangle}
\begin{gathered}
\xymatrix{
UFU(P_i) \ar[rd]^-{UF(\alpha_i)} \ar@/^4pc/@{-->}[rrrddd]^-{U(\theta^{-1}(h_i))} & &&\\
&   UF(\underset{i\in I}{\text{colim}\,}  U(P_i)) \ar[rrdd]^-{U(f)}&& \\
 & \underset{i\in I}{\text{colim}\,} U(P_i) \ar[u]^-{\eta}\ar[rrd]_-{\theta(f)} &  &\\
U(P_i) \ar[uuu]^-{\eta_{U(P_i)}} \ar[ur]^-{\alpha_i} \ar[rrr]_-{h_i}& & & U(R)
}
\end{gathered}
\end{equation}

El cuadrilátero es conmutativo por la naturalidad de la counidad. 
Entonces  la composición en la diagonal $U(f)UF(\alpha_i)$ 
está determinada por la última parte, es decir $h_i$, 
y la biyección de la adjunción
$F\vdash U$ dice que $U(f)UF(\alpha_i)$ es igual a $U(\theta^{-1}(h_i))$,
lo cual significa que $fd_0$ está determinado por $U(\theta^{-1}(h_i)):UFU(P_i)\to U(R)$.

Para $fd_1$, recuerde que $d_1$ es determinado por la composición \ref{dg-d1-def}, entonces $fd_1$
es determinado por la composición,

\begin{equation}
\begin{gathered}
\xymatrix{
UFU(P_i) \ar[r]^{U(\epsilon_{P_i})} & U(P_i) \ar[r]^-{\alpha_i} & \underset{i\in I}{\text{colim}\,}  
U(P_i) \ar[r]^-{\eta} & UF(\underset{i\in I}{\text{colim}\,}  U(P_i))
\ar[r]^-{U(f)} & U(R)
}
\end{gathered}
\end{equation}

Por \ref{dg-big-triangle} se tiene que $U(f)\eta \alpha_i=\theta(f)\alpha_i=h_i$, 
entonces $fd_1$ está determinado por la composición
$U(\epsilon_{P_i})h_i:UFU(P_i)\to U(R)$. 
Junto con el resultado para $fd_0$, dice que $fd_1=fd_0$ si y solo si el
siguiente diagrama es conmutativo.

\begin{equation}\label{dg-ope-morphs-fi}
\begin{gathered}
\xymatrix{
UFU(P_i)\ar[d]_-{U(\epsilon_{P_i})}  \ar[rd]^-{U(\theta^{-1}(h_i))} &
  \\
U(P_i) \ar[r]_-{h_i} & U(R)
}
\end{gathered}
\end{equation}

Este diagrama es conmutativo si y solo si $h_i$ 
es un morfismo de operads, en otras palabras,
si existe un morfismo de operads $f_i:P_i\to R$ tal que $U(f_i)=h_i$. 

Suponga que \ref{dg-ope-morphs-fi} es conmutativo. Se necesita probar que $h_i$
preserva la estructura operádica en $P_i$, 
es decir, se necesita verificar las condiciones de la definición \ref{df-operads-morphism}. 
Para evitar confusiones, se denota $\lambda$ la unidad del operad en esta parte.

\begin{enumerate}
\item La unidad. 

\begin{align}
h_i U(\lambda_{P_i}) &= h_i U(\epsilon_{P_i}\lambda_{FU(P_i)})\\
&= h_i U(\epsilon_{P_i})U(\lambda_{FU(P_i)})\\
&=U(\theta^{-1}(h_i))U(\lambda_{FU(P_i)})\\
&=U(\theta^{-1}(h_i)\lambda_{FU(P_i)})=U(\lambda_{R})\\
\end{align}

\item Equivarianza es consecuencia del hecho que todos son morfismos de $\mathbb{S}$-modules.

\item La composición.

\begin{align}
h_i U(\gamma_{P_i}) 
&= h_i U(\gamma_{P_i}1_{P_i}) \\
&=h_i U(\gamma_{P_i})1_{U(P_i)} \\
&=h_i U(\gamma_{P_i}) U(\epsilon_{P_i})\eta_{U(P_i)}\\
&=h_i U(\gamma_{P_i}\epsilon_{P_i})\eta_{U(P_i)}\\
&=h_i U(\epsilon_{P_i}\gamma_{FU(P_i)})\eta_{U(P_i)}\\
&=h_i U(\epsilon_{P_i})U(\gamma_{FU(P_i)})\eta_{U(P_i)}\\
&=U(\theta^{-1}(h_i))U(\gamma_{FU(P_i)})\eta_{U(P_i)}\\
&=U(\theta^{-1}(h_i)\gamma_{FU(P_i)})\eta_{U(P_i)}\\
&=U(\gamma_{R}\theta^{-1}(h_i))\eta_{U(P_i)}\\
&=U(\gamma_{R})U(\theta^{-1}(h_i))\eta_{U(P_i)}\\
&=U(\gamma_{R})h_i
\end{align}
\end{enumerate}

Recíprocamente, suponga que existe una colección de morfismos de operads
$\{f_i:P_i\to R\}_{i\in I}$, 
tal que $U(f_i)=h_i$ para cada $i\in I$.  
Entonces el siguiente triángulo es conmutativo por la naturalidad de $\theta^{-1}$.

\begin{equation}
\begin{gathered}
\xymatrix{
FU(P_i)\ar[rd]^-{\theta^{-1}(U(f_i))} \ar[d]_-{\epsilon_{P_i}}&\\
P_i \ar[r]_-{f_i}&R
}
\xymatrix{
&\mathcal{OP}(FU(P_i),R) & \mathbb{S}\text{-Mod}(U(P_i),U(R)) \ar[l]_-{\theta^{-1}}\\
&\mathcal{OP}(FU(P_i),P_i) \ar[u]^-{{f_i}_*} & \mathbb{S}\text{-Mod}(U(P_i),U(P_i))\ar[u]_-{U(f_i)_*} \ar[l]_-{\theta^{-1}}
}
\end{gathered}
\end{equation}

Entonces el diagrama \ref{dg-ope-morphs-fi} es conmutativo. 
Ahora se pasa a verificar que
$Q$ es el colímite de la colección de operads $\{P_i\}_{i\in I}$.
Se vió que esta colección induce morfismos $h_i:U(P_i)\to U(Q)$ de $\mathbb{S}$-modules
que satisfacen \ref{dg-ope-morphs-fi}, 
entonces definen morfismos de operads $f_i:P_i\to Q$,
tal que $U(f_i)=h_i$. Estos morfismos son las aristas del cocono.

Cualquier colección de morfismos de operads $f_i:P_i\to R$, 
define un morfismo de operads
$f$ de $F(\underset{i\in I}{\text{colim}\,} U(P_i))$ a $R$ tal que $fd_0=fd_1$. 
Entonces existe un único morfismo de operads $g:Q\to R$ tal que $gq=f$.
El morfismo $g$ conmuta con las aristas del cocono, y esto exhibe a $Q$
como el colímite de $\{P_i\}_{i\in I}$.
\end{proof}

\section{Agradecimientos}

El autor agradece el financiamiento de la Vicerrectoría de Investigación de la Universidad de Costa Rica a través del proyecto \textbf{821-B6-A19}.

\bibliographystyle{amsplain}
\bibliography{principal-bibliography}
\end{document}